\newtheorem{definition}{Definition}
\newtheorem{corollary}{Corollary}
\newtheorem{proposition}{Proposition}
\newtheorem{observation}{Observation}
\newenvironment{proofof}[1]{\par\addvspace\topsep\noindent
	{\bf Proof #1:} \ignorespaces }{\qed}
\newcommand{\fig}[1]{\figurename~\ref{#1}}
\def\RR{{\mathds{R}}}
\def\PP{{\mathds{P}}}
\def\AA{{\mathcal{A}}}
\DeclareMathOperator{\conv}{conv}
\begin{document}

\doi{}
\Issue{0}{0}{0}{0}{0} 
\HeadingAuthor{O. Aichholzer et al.} 
\HeadingTitle{Minimal Representations of Order Types} 
\title{Minimal Representations of Order Types by Geometric Graphs}
\Ack{Research %
	supported by the German Science Foundation (DFG), the
	Austrian Science Fund (FWF), and the Swiss National Science 
	Foundation (SNSF) within the collaborative DACH project 
	\emph{Arrangements and Drawings}.
	O.A., I.P., and B.V.\ were supported by %
	Austrian Science Fund 
	(FWF) grant W1230. 
	M.B., J.K., and P.V.\ were supported by %
	grant no.~18-19158S of the 
	Czech Science Foundation (GA\v{C}R). 
	M.B.\ and J.K.\ were %
	supported by Charles University project UNCE/SCI/004. 
	M.B.\ has received funding from European Research Council (ERC) 
	under the European Union's Horizon 2020 research. 
	M.H.\ and E.W.\ were supported by SNSF Project 200021E-171681.
	A.P.\ was supported by a Schr\"odinger fellowship of the 
	Austrian Science Fund (FWF): J-3847-N35. 
	M.S.\ was partially supported by DFG Grant FE 340/12-1.
	W.M.\ was partially supported by ERC StG 757609 and DFG Grant
	3501/3-1.}

\author[1]{Oswin Aichholzer}{oaich@ist.tugraz.at}
\author[2]{Martin Balko}{balko@kam.mff.cuni.cz}
\author[3]{Michael Hoffmann}{hoffmann@inf.ethz.ch}
\author[2]{Jan Kynčl}{kyncl@kam.mff.cuni.cz}
\author[4]{Wolfgang Mulzer}{mulzer@inf.fu-berlin.de}
\author[5]{Irene Parada}{i.m.de.parada.munoz@tue.nl}
\author[1]{Alexander Pilz}{apilz@ist.tugraz.at}
\author[6]{Manfred Scheucher}{scheucher@math.tu-berlin.de}
\author[2]{Pavel Valtr}{valtr@kam.mff.cuni.cz}
\author[1]{Birgit Vogtenhuber}{bvogt@ist.tugraz.at}
\author[3]{Emo Welzl}{emo@inf.ethz.ch}

\affiliation[1]{Institute of Software Technology, Graz University of Technology, Austria}

\affiliation[2]{Institute for Theoretical Computer Science (CE-ITI), Charles University, Prague, Czech Republic}

\affiliation[3]{Department of Computer Science, ETH Z\"urich, Switzerland}

\affiliation[4]{Institut f\"ur Informatik, Freie Universit\"at Berlin, Germany}

\affiliation[5]{Department of Mathematics and Computer Science, TU Eindhoven,\\ The Netherlands}

\affiliation[6]{Institute of Mathematics, Technische Universit\"at Berlin, Germany}


\maketitle

\begin{abstract}
In order to have a compact visualization of the order type of 
a given point set $S$, 
we are interested in geometric graphs on $S$ with few edges that unambiguously display %
the order type of $S$. 
We introduce the concept of \emph{exit edges}, 
which prevent the order type from changing under continuous motion of vertices.
That is, 
in the geometric graph on $S$ whose edges are the exit edges, 
in order to change the order type of $S$, at least one vertex needs to move across an exit edge.
Exit edges have a natural dual characterization,  
which allows us to efficiently compute them and to bound their number. 
\end{abstract}

\Body 

\section{Introduction} 

Let $S, T \subset \RR^2$ be two sets of $n$ labeled points
in general position, 
that is, such that no three points in a set are collinear. 
We say that $S$ and $T$ have \emph{the same order type} if there is a 
bijection $\varphi: S \rightarrow T$ such that any triple $(p, q, r) \in S^3$ of three
distinct points has the same orientation (clockwise or
counterclockwise) %
as the image 
$(\varphi(p), \varphi(q), \varphi(r)) \in T^3$.
The resulting equivalence relation on planar $n$-point sets has 
a finite number of equivalence classes, the \emph{order types}~\cite{multidimensional_sorting}.
Representatives of all the distinct order types of five and six points are illustrated in \fig{fig:5ot}.
Among other things, the order type determines
which geometric graphs can be drawn on a point set without crossings.
Thus, order types %
appear ubiquitously in the study of
extremal problems on geometric graphs.

\begin{figure}[h]
	\centering
	\includegraphics{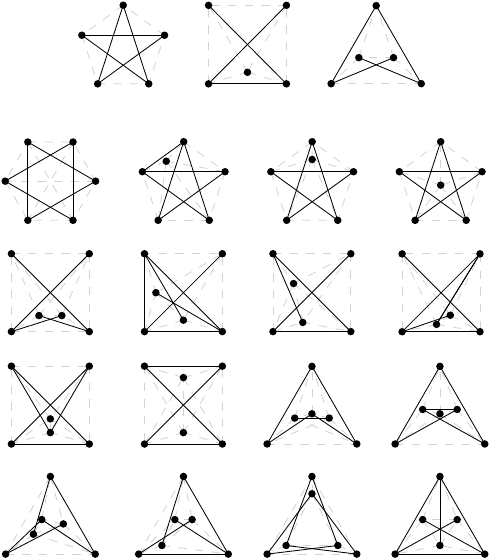}
	\caption{Representatives of the three order types of five points 
		and the sixteen order types of six points
		in general position. 
		Exit edges are drawn in black.}\label{fig:5ot}
\end{figure}

Now, suppose we have found that an order type is interesting for a problem, and 
we would like to illustrate it in a publication.
One solution is to give explicit coordinates of a representative 
point set~$S$; see \fig{fig:exit_edge} (left). 
This is unlikely to satisfy most readers.
We could also present $S$ as a set of dots in a figure.
For some point sets (particularly those with extremal 
properties), the reader may find it difficult to discern the 
orientation of an almost collinear point triple.
To mend this, we could draw all lines spanned by two 
points in $S$.
In fact, it suffices to present only the \emph{segments} between 
the point pairs (the complete geometric graph on $S$).
The orientation of a triple can then be obtained
by inspecting the corresponding triangle; see \fig{fig:exit_edge} (middle).
However, such a drawing is rather dense,
and we may have trouble following an edge from one endpoint to the other.
Therefore, we want to reduce the number of edges in the drawing as
much as possible, but so that the order type remains uniquely identifiable. 
In \fig{fig:exit_edge} (right)  
the triple orientations are unambiguously displayed since continuous deformations that keep the edges straight do not allow to change the orientation of any triple.

\begin{figure}[htb]
	\hfil
	\begin{minipage}[b]{.32\linewidth}\footnotesize
		\texttt{%
			(-1,1)\\
			(1,1)\\
			(-1,-1)\\
			(1,-1)\\
			(-0.6,0.4)\\
			(-0.6,-0.4)}
		\label{fig:exit_edge:0} 
	\end{minipage}
	\hfil\hfil
	\begin{minipage}[b]{.32\linewidth}
		\centering\includegraphics[page=1]{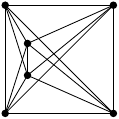}
		\label{fig:exit_edge:1} 
	\end{minipage}
	\hfil\hfil
	\begin{minipage}[b]{.32\linewidth}
		\centering\includegraphics[page=2]{exit_edge_basics}
		\label{fig:exit_edge:2} 
	\end{minipage}
	\hfil
	\caption{
		Three different representations of an order type of six points.
	}\label{fig:exit_edge}
\end{figure}
\paragraph{Results} 
We introduce the concept of \emph{exit edges} to
capture which edges are sufficient to uniquely identify a given order
type in a robust way under continuous motion of vertices. 
\emph{Exit graphs}, defined as the geometric graphs whose edges are the exit edges, 
are \emph{supporting} for a point set: 
in an exit graph at least one vertex needs to move across an (exit) edge in
order to change the order type. 
(For precise definitions of these concepts we refer to Definitions~\ref{def:supporting} and \ref{def:exit-edge}.)
Though exit edges are defined on a point set, 
the set of exit edges only depends on the order type and not on the particular representative. 

We give an alternative characterization of exit
edges in terms of the dual line arrangement, where an exit edge  
corresponds to one or two empty triangular cells. 
This allows us to efficiently compute the set of exit
edges for a given set of $n$ points %
in $O(n^2)$ time and space.

Using the more general framework of abstract order types and their dual
pseudoline arrangements, we prove that every set of $n\ge 4$ points has
at least $(3n-7)/5$ exit edges. 
We also describe a family of $n$
points with $n-3$ exit edges, showing that the best possible lower bound is of order  $\Omega(n)$. 
An upper bound of $n(n-1)/3$ follows
from known results on the number of triangular cells in line
arrangements~\cite{Gruenbaum_aas}. 
Thus, compared to the complete
geometric graph with $n(n-1)/2$ edges, using only exit edges saves  
at least one third of the edges. 
We present a random construction with a quadratic expected number of exit edges. 

Exit graphs are not always minimal supporting graphs. 
In particular, the requirement of keeping the edges straight together with the non-stretchability of certain pseudoline arrangements 
can result in exit edges being sometimes unnecessary.
The relation between the number of exit edges and the minimum number of edges in a supporting geometric graph is an open question.

\paragraph{Identification of order types} 
Let $S$ be a set of $n$ labeled points in the plane.
A \emph{geometric graph} on $S$ is a graph with vertex set $S$ 
whose edges are line segments between their
endpoints. 
A geometric graph is thus a drawing of an abstract graph.
Two geometric graphs $G$ and $H$ are \emph{isomorphic} if there is an orientation-preserving homeomorphism of the plane transforming 
$G$ into~$H$. 
Each class of this equivalence relation may be described 
combinatorially by the cyclic orders of the edge segments 
around vertices and crossings, and by the incidences of 
vertices, crossings, edge segments, and faces.
In the following, we will consider topology-preserving deformations.  
An \emph{ambient isotopy} of the Euclidean plane is a continuous 
map $f: \RR^2\times [0,1] \rightarrow \RR^2$ such that 
$f(\cdot, t)$ is a homeomorphism for every $t\in [0,1]$ 
and $f(\cdot, 0) = \text{Id}$. 
Note that if there is an ambient isotopy transforming a geometric graph $G$ into another geometric graph $H$, then no vertex can cross through an edge and $G$ and $H$ are isomorphic. 
\fig{fig:isotopy} shows an illustration. 

\begin{figure}[th]
	\centering
	\includegraphics[page=5]{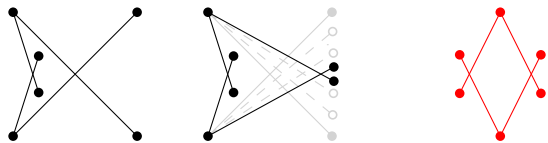}
	\caption{
		The geometric graph on the left can be transformed by an ambient isotopy into the geometric graph in the middle, but not into the geometric graph on the right. 
	}\label{fig:isotopy} 
\end{figure}

\begin{definition}\label{def:supporting}
	Let $G$ be a geometric graph on a point set $S$. %
	We say that $G$ %
	is \emph{supporting} for $S$ if every ambient isotopy $f$ of $\RR^2$ 
	that, for every $t\in[0,1]$, 
	keeps the images of the edges of $G$ straight 
	(thus, transforming $G$ into another geometric graph) and
	allows 
	at most one triple of collinear points of $f(S,t)$    
	also preserves the order type of the vertex set.
\end{definition}

Clearly, every complete geometric graph is supporting  
since all the triangles preserve their orientation, 
but there are supporting graphs with fewer edges, like the one in \fig{fig:isotopy} (left).

\paragraph{Related work} 
The connection between order types and geometric graphs 
has been studied intensively, both for
planar drawings and for drawings minimizing the number of crossings.
For example, it is \NP-complete to decide whether a planar 
graph can be embedded on a given point set~\cite{cabello}.
Continuous movements of the 
vertices of plane geometric graphs have also been considered~\cite{morph}.
The continuous movement of points maintaining the order type 
was considered by Mn\"ev~\cite{FelsnerGoodman2017,mnev}. 
He showed that there are point sets with the same order 
type such that there is no ambient isotopy between them 
preserving the order type,
settling a conjecture by Ringel~\cite{ringel}.
The orientations of triples that have to be fixed to determine the 
order type are strongly related to the concept of 
\emph{minimal reduced systems}~\cite{coordinatization}.
\emph{Compact} encodings of order types using few bits and allowing for fast orientation queries have also been studied. 
Cardinal \emph{et~al.}~\cite{encodings19} presented such an encoding for order types of $n$ points 
that uses $O(n^2 (\log \log n)^2 / \log n)$ bits, 
while there are $2^{\Theta (n\log n)}$ order types.

\paragraph{Outline} 
We introduce the concept of \emph{exit edges} for a given point set. 
The resulting \emph{exit graphs} are always 
supporting, though they are not necessarily minimal. 
In Section~\ref{sec:characterization} we show that some exit edges are rendered unnecessary by 
non-stretchability of certain pseudoline arrangements. 
Despite being non-minimal in general, we argue that exit graphs 
are good candidates for supporting graphs by discussing their 
dual representation in pseudoline arrangements
(Section~\ref{sec:empty_triangles}).
This connection allows us to both compute exit edges efficiently and
give bounds on their number (Section~\ref{sec:bounds}). 
Supporting graphs in general need not be connected,
and two minimal geometric graphs that are supporting for point sets 
with different order types can be drawings of 
the same abstract graph; see \fig{fig:5ot} (right).
Thus, the structure of the drawing is crucial.
In Section~\ref{sec:properties} we provide some further properties of the exit graphs. 
We conjecture that geometric graphs whose edges are the exit edges are not only 
supporting but also they encode the order type, 
as discussed in Section~\ref{sec:conclusion}. 

\section{Exit edges}\label{sec:characterization}

To obtain a supporting graph with fewer edges than the complete geometric graph, 
we select edges so that no vertex of the resulting geometric graph can be continuously deformed (as in Definition~\ref{def:supporting}) to 
change the order type while preserving isomorphism.

\begin{definition}\label{def:exit-edge}
	Let $S \subset \RR^2$ be finite and in general position. 
	Let $a, b, c \in S$ be distinct.
	Then, $ab$ is an \emph{exit edge with witness} $c$ if 
	there is no $p \in S$ such that the line 
	$\overline{ap}$ separates $b$ 
	from $c$ or the line $\overline{bp}$ separates $a$ from $c$. 
	We say that $ab$ is an \emph{exit edge} if there exists a point $c$ such that $ab$ is an exit edge with witness~$c$. 
	The geometric graph on $S$ whose edges are all the exit edges 
	is called the \emph{exit graph} of $S$.
\end{definition}
Equivalently, $ab$ is an \emph{exit edge} with \emph{witness} 
$c$ if and only if 
the double-wedge through $a$ between $b$ and $c$ and the 
double-wedge through $b$ between $a$ and $c$ contain no point of $S$ 
in their interior; see \fig{fig:channel} (left). 
We note that the exit graph is invariant under nondegenerate affine transformations.

\begin{figure}[ht]
	\centering
	\includegraphics[page=3]{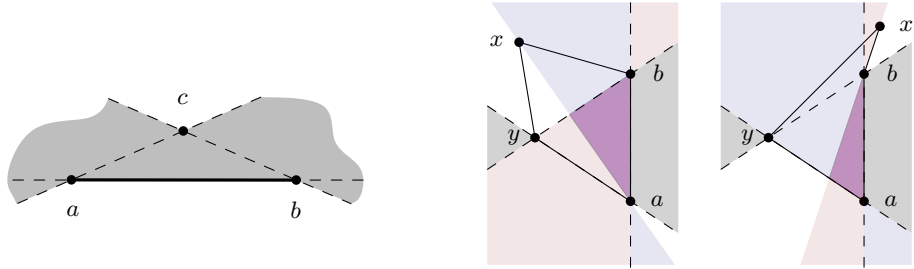}
	\caption{Characterizing exit edges. 
		Left: If the gray region is empty of points, 
		then the edge $ab$ is an exit edge.
		Right: An illustration of the proof of 
		Proposition~\ref{lem:characterisation_of_exit_edges}. 
	}\label{fig:channel} 
\end{figure}

An exit edge has at most two witnesses.
If $|S| \ge 4$ and $ab$ is an exit edge in $S$ with witness~$c$,
neither $ac$ nor $bc$ can be an exit edge with witness $b$ or $a$, 
respectively, 
as otherwise the union of empty regions would cover the rest of the whole plane except the points $a$, $b$, and $c$.
We illustrate the set of exit edges for sets of 5 points in \fig{fig:5ot} (top).

Exit edges can be characterized via 4-holes.
For an integer $k \ge 3$, a \emph{(general) $k$-hole} in $S$ 
is a simple polygon $\mathcal{P}$ spanned by $k$ points of $S$
whose interior contains no point of~$S$.
If $\mathcal{P}$ is convex, we call $\mathcal{P}$ a \emph{convex $k$-hole}.
A point $a \in S$ or an edge $ab$ of the complete geometric graph on $S$ 
is \emph{extremal} for $S$ 
if it lies on the boundary of the convex hull of~$S$.
A point or an edge that is not extremal in $S$ is 
\emph{internal} in $S$.

\begin{proposition}\label{lem:characterisation_of_exit_edges}
	Let $S \subset \RR^2$ be a point set in general position and 
	let $a, b\in~S$.
	Then, $ab$ is not an exit edge of $S$ if and only if 
	the following conditions hold:
	\begin{enumerate}
		\item
		If $ab$ is extremal for $S$, 
		then $ab$ is an edge of at least one convex 4-hole in~$S$.
		\item 
		If $ab$ is internal in $S$, 
		then there are two 4-holes $abxy$ and $bauv$, 
		in counterclockwise order,
		such that their reflex angles (if any) are incident to~$ab$.
	\end{enumerate}
\end{proposition}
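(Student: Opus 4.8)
The plan is to work from the double-wedge reformulation of exit edges stated just after Definition~\ref{def:exit-edge}: $ab$ is an exit edge with witness $c$ exactly when the double-wedge through $a$ between $b$ and $c$ and the double-wedge through $b$ between $a$ and $c$ are both empty of points of $S$. Emptiness of the double-wedge at $a$ says precisely that no connecting line $\overline{ap}$ (with $p\in S$) passes between $b$ and $c$, i.e.\ that $c$ is \emph{radially adjacent} to $b$ as seen from $a$ in the projective (mod-$\pi$) angular order; symmetrically at $b$. Hence $ab$ is \emph{not} an exit edge iff every candidate witness $c$ fails one of these two adjacencies, and I would prove the two bi-implications of Proposition~\ref{lem:characterisation_of_exit_edges} by turning ``blocking points'' into hole boundaries and back.

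\emph{Extremal case.} Here all points of $S$ lie on one side of $\overline{ab}$, so each double-wedge degenerates to a single wedge, and emptiness of both wedges for a witness $c$ becomes the statement that every other point $p$ lies in the cone at $c$ opposite to triangle $abc$; equivalently $c\in\triangle abp$ for all $p$. Such a witness is exactly a minimum of the partial order ``$p\preceq q$ iff $q$ lies in that cone of $p$''. If a minimum exists, $ab$ is an exit edge. If not, there are two $\preceq$-incomparable minimal points $x,y$; incomparability means $x\notin\triangle aby$ and $y\notin\triangle abx$, so $a,b,x,y$ are in convex position, and minimality makes the quadrilateral empty, yielding a convex $4$-hole on $ab$. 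Conversely a convex $4$-hole $abxy$ blocks every candidate witness, since each of $x,y$ then lies in a forbidden wedge. This settles the first condition.

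\emph{Internal case.} Now $\overline{ab}$ has points on both sides and the double-wedge at $a$ genuinely straddles both sides, so a witness $c$ (say above $\overline{ab}$) forces emptiness of an upper and an antipodal lower sector at each of $a$ and $b$. When no witness exists I would assemble the blocking points into two general $4$-holes. On the upper side, the two points flanking $ab$ radially (the neighbor of $b$ around $a$ and the neighbor of $a$ around $b$) together with $a,b$ bound a quadrilateral $abxy$ that is a $4$-hole by the emptiness of the flanking wedges; because the blockers sit ``beyond'' the endpoints of $ab$ rather than beyond the edge itself, the only possible reflex vertices are $a$ or $b$. The same construction on the lower side produces $bauv$, and reading both off in the induced boundary orientation places them in the stated counterclockwise orders. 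Conversely, given two such $4$-holes, I would show that each of the (at most two) candidate witnesses is separated from $a$ or $b$ by a vertex of one hole, so no witness survives; this settles the second condition.

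\emph{Main obstacle.} The extremal case is essentially a one-dimensional minimality argument, but the internal case is the crux: the double-wedge couples the two sides of $\overline{ab}$, so a hole on one side can obstruct a witness on the other, and the quadrilaterals are allowed to be non-convex. The delicate points are verifying that the four flanking points really form \emph{simple} quadrilaterals whose only reflex angles are incident to $ab$, that their interiors are empty (which follows from the emptiness of the flanking wedges), and that every candidate witness is indeed blocked. I would handle this by a careful case analysis on the radial neighbors of $b$ around $a$ and of $a$ around $b$, distinguishing whether each neighbor lies above or below $\overline{ab}$, and using the mod-$\pi$ adjacency to pin down the cyclic positions of $x,y,u,v$.
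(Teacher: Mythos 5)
Your reduction to radial adjacency is sound and your forward direction (a $4$-hole of the stated form traps any would-be witness in its interior) matches the paper's argument, but both halves of the backward direction (``no witness $\Rightarrow$ the $4$-holes exist'') rest on steps that fail as stated. In the extremal case, ``minimality makes the quadrilateral empty'' is not true: if $x,y$ are two incomparable minimal elements of your poset, you only obtain that $\triangle abx$ and $\triangle aby$ are empty, and the convex quadrilateral $abxy$ is the union of these two triangles together with a third triangle near the edge $xy$ (cut off by the two diagonals), which can contain further points of $S$. Concretely, for $a=(0,0)$, $b=(1,0)$, $x=(1.1,1)$, $y=(-0.1,1)$ and a fifth point $z=(0.5,0.99)$, the points $x$ and $y$ are incomparable minimal elements, yet $z$ lies inside $abxy$. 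The paper escapes this with a second extremal choice: it takes $p$ closest to the line $\overline{ab}$ (so $\triangle abp$ is a hole) and then, among the blockers $q$, one closest to $\overline{ap}$, which makes the two triangles of a triangulation of the quadrilateral empty and hence the quadrilateral itself a hole. Some such second extremal selection is needed; a single appeal to minimality does not supply it.

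The internal case is where your construction actually breaks. Take $a=(0,0)$, $b=(2,0)$, $y=(3,0.1)$, $x=(-1,0.1)$, $z=(1,0.05)$, plus a point far below $\overline{ab}$ to make $ab$ internal. Then $y$ is the radial neighbor of $b$ around $a$ on the upper side and $x$ is the radial neighbor of $a$ around $b$, so both flanking wedges are empty; nevertheless $z$ lies in the interior of the convex quadrilateral on $a,b,x,y$, so the ``flanking quadrilateral'' is not a $4$-hole (the $4$-hole the proposition actually guarantees here is $abyz$, which your recipe never produces). The emptiness of the two flanking wedges does not cover the quadrilateral they span, for the same reason as in the extremal case, and the remaining verifications (simplicity, reflex angles only at $a$ or $b$, and that every candidate witness is blocked) are exactly the parts you defer to an unexecuted case analysis. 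So the crux of the proposition is not established. The repair is again the paper's: on each side of $\overline{ab}$ start from the point $p$ closest to the line, note that $p$ fails to be a witness only because of some blocker $q$, choose $q$ closest to $\overline{ab}$ among the blockers, and glue the two resulting empty triangles into the required (possibly non-convex) $4$-hole, whose reflex angle lands at $a$ or $b$ precisely when $q$ lies on the opposite side of $\overline{ab}$ from $p$.
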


\noindent We remark that an internal exit edge either has a witness 
on both sides or is incident to at least one (not necessarily convex) 4-hole on one side.

\begin{proof}
	Let $ab$ be an exit edge with a witness~$c$ 
	that lies, without loss of generality, to the left 
	of~$\overrightarrow{ab}$.
	Suppose  
	there is a general 4-hole $abxy$, traced counterclockwise, 
	such that the reflex angle of $abxy$ (if it exists) 
	is incident to~$ab$. 
	We can assume that $y$ lies to the left of~$\overrightarrow{ab}$,
	as in \fig{fig:channel} (right).	
	First, suppose that $abxy$ is convex (this must hold
	if $ab$ is extremal). 
	Since $ab$ is an exit edge with witness $c$,
	the line $\overline{ax}$ does not separate $c$ from $b$ and
	the line $\overline{by}$ does not separate $c$ from $a$.
	Thus, $c$ must be inside the $4$-hole $abxy$, which is impossible.
	Second, suppose that $abxy$ is not convex (then, $ab$ is
	internal), and $x$ is to the right of $\overrightarrow{ab}$.
	Since $ab$ is an exit edge with witness $c$,
	the line $\overline{bx}$ does not separate $a$ from $c$ 
	and the line $\overline{ay}$ does not separate $b$ from $c$,
	so $c$ lies inside the $4$-hole $abxy$, again 
	a contradiction.
	
	Conversely, assume that $ab$ is not an exit edge.
	First, let $ab$ be extremal, and
	let $p$ be the closest point in 
	$S\setminus \{a,b\}$ to the line $\overline{ab}$.
	The triangle $abp$ is a $3$-hole in~$S$.
	Since $p$ is not a witness for $ab$, there is a point 
	$q \in S\setminus \{a,b,p\}$ such that, without loss of 
	generality, the line $\overline{bq}$ separates $a$ from $p$.
	Since $ab$ is extremal, $q$ lies on the same side of 
	$\overrightarrow{ab}$ as $p$ and, in particular, the polygon 
	$abpq$ is convex.
	If we choose $q$ so that it is the closest such point 
	to the line $\overline{ap}$, 
	the triangles $bpq$ and $abq$ are $3$-holes in $S$.
	Altogether, we obtain a convex $4$-hole $abpq$ in $S$.
	
	Second, let $ab$ be internal.
	Let $p$ be closest in $S\setminus \{a,b\}$ to 
	the line $\overline{ab}$ such that $p$ lies to the left of 
	$\overrightarrow{ab}$.
	The triangle $abp$ is a $3$-hole in $S$.
	Since $p$ is not a witness for $ab$, there is a 
	point $q \in S\setminus \{a,b,p\}$ such that either the 
	line $\overline{bq}$ separates $a$ from $p$ or 
	the line $\overline{aq}$ separates $b$ from $p$.
	If $q$ lies to the left of $\overrightarrow{ab}$, 
	we obtain a convex $4$-hole as in the previous case.
	Thus, we can assume that all such points $q$ 
	lie to the right of $\overrightarrow{ab}$.
	We choose the point $q$ so that it is (one of the) 
	closest to the line $\overline{ab}$ among all points that 
	prevent $ab$ from being an exit edge with witness $p$.
	Without loss of generality, we assume that the line 
	$\overline{bq}$ separates $a$ from~$p$.
	The choice of $q$ guarantees that $bpq$ is a $3$-hole in $S$.
	Thus, $abqp$ is a $4$-hole in $S$ incident to $ab$ from the left.
	An analogous argument with a point $p'$ from 
	$ S\setminus \{a,b\}$ that is closest to $\overline{ab}$ 
	such that $p'$ lies to the right of $\overrightarrow{ab}$ 
	shows that there is an appropriate $4$-hole in $S$ 
	incident to $ab$ from the right.
\end{proof}

\begin{proposition}\label{prop:exit_edge_continuous_movement}
	Let $S \subset \RR^2$ be finite and in general position and, 
	for every $t\in [0,1]$, let $S(t)$ be a 
	continuous deformation of $S$ at time $t$.  
	More formally, let $f: \RR^2\times [0,1] \rightarrow \RR^2$ 
	be an ambient isotopy and $S(t) = 
	\{f(s,t) \mid s\in S\}$, for $t \in [0, 1]$.
	Suppose that for every $t \in [0, 1]$, there is at most one collinear triple of points in $S(t)$. Let $(a,b,c)$ be the first triple
	to become collinear, at time $t_0 > 0$.
	If $c$ lies on the segment $ab$ in $S(t_0)$, then $ab$ is an 
	exit edge of $S(0)$ with witness~$c$.
\end{proposition}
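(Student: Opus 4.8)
The plan is to use that the order type is frozen until the first collinearity and then to read the witness condition for $ab$ directly off the geometry at the collision time $t_0$. First I would note that on $[0,t_0)$ no three points of $S(t)$ are collinear, so every triple keeps a well-defined, nonzero orientation; since orientations vary continuously and never vanish on this interval, each one is constant there, and therefore $S(t)$ has the same order type as $S(0)=S$ for all $t\in[0,t_0)$. In particular, the separation conditions of Definition~\ref{def:exit-edge} that we must verify for $S$ may equally well be checked at any time $t<t_0$, that is, arbitrarily close to $t_0$.

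I would then argue by contradiction. Suppose $ab$ is not an exit edge with witness $c$ in $S$. By Definition~\ref{def:exit-edge} there is a point $p\in S$ such that, say, the line $\overline{ap}$ separates $b$ from $c$ (the case in which $\overline{bp}$ separates $a$ from $c$ is symmetric). By the previous paragraph this separation persists for all $t\in[0,t_0)$, so the orientations of the triples $(a,p,b)$ and $(a,p,c)$ have opposite signs throughout $[0,t_0)$.

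The core of the proof is a continuity argument at $t_0$. Here I use that the first collision is a genuine three-point event, so that at time $t_0$ the only collinear points lie on the common line $\ell$ through $a,b,c$ and $p\notin\ell$; in the supporting setting of Definition~\ref{def:supporting}, where at most three points are ever collinear, this holds automatically. Then the triples $(a,p,b)$ and $(a,p,c)$ are non-collinear for every $t\in[0,t_0]$ --- for $t<t_0$ because $S(t)$ is in general position, and at $t_0$ because $p\notin\ell$ --- so their orientations are continuous and nowhere zero on the closed interval $[0,t_0]$, and hence each keeps a constant sign there. At $t_0$, however, the line $\overline{ap}$ passes through $a$ and meets $\ell$ only at $a$; since $c$ lies on the open segment $ab$, the points $b$ and $c$ lie on a common ray out of $a$ and therefore strictly on the same side of $\overline{ap}$. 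Thus the orientations of $(a,p,b)$ and $(a,p,c)$ agree in sign at $t_0$, and by constancy they agree in sign at $t=0$ as well --- contradicting that $\overline{ap}$ separates $b$ from $c$ in $S$. The case of $\overline{bp}$ is identical, using that $a$ and $c$ lie on a common ray out of $b$. Hence no separating point $p$ exists, and $ab$ is an exit edge of $S$ with witness $c$.

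The step I expect to be delicate is precisely the assumption that the first collinearity involves only the three points $a,b,c$. If a fourth point $p$ became collinear with them at the same instant $t_0$, it would lie on $\ell$, both orientations above would vanish at $t_0$, and the sign-transfer argument would break down; indeed $ab$ then need not be an exit edge with witness $c$, so the three-point nature of the first collision (guaranteed under Definition~\ref{def:supporting}) is essential rather than cosmetic.
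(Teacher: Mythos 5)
Your proof is correct and takes essentially the same approach as the paper's: both rest on the constancy of triple orientations on $[0,t_0)$ together with the observation that $c$ lying on the segment $ab$ at time $t_0$ is incompatible with a line through $a$ (or $b$) and a further point $p$ separating $b$ from $c$ (resp.\ $a$ from $c$). You merely spell out, via the sign-constancy of $\mathrm{orient}(a,p,b)$ and $\mathrm{orient}(a,p,c)$ on the closed interval $[0,t_0]$, the continuity step that the paper compresses into a single ``Thus,'' and the caveat you flag --- that the first degeneracy must be a genuine three-point event --- is exactly the assumption the paper relies on implicitly through Definition~\ref{def:supporting}.
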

\begin{proof}
	For $t \in [0, t_0)$, the triple orientations in $S(t)$ remain unchanged, 
	and in $S(t_0)$, the point $c$ lies on $ab$ and the orientations of all triples except $(a,b,c)$ are still unchanged. 
	Thus, for $t \in [0, t_0)$, there is no line through two points of $S(t)$ 
	that strictly separates the relative interior of $ab$ from $c$.
	In particular, there is no such separating line through $a$ or $b$ in $S(0)$.
	Hence, $ab$ is an exit edge with witness~$c$.
\end{proof}

\begin{corollary}
	The exit graph of every point set is supporting.
\end{corollary}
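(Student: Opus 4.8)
The plan is to argue by contradiction, using Proposition~\ref{prop:exit_edge_continuous_movement} as the main engine. Suppose the exit graph $G$ of some point set $S$ is not supporting. Then, by Definition~\ref{def:supporting}, there is an ambient isotopy $f$ of $\RR^2$ that keeps every edge of $G$ drawn as a straight segment, keeps at most three points of $S(t)$ collinear for all $t\in[0,1]$, and nevertheless changes the order type of the vertex set between time $0$ and time $1$. The goal is to show that such an $f$ cannot exist.

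Next I would locate the first moment of degeneracy. Since the orientation of any triple is the sign of a continuous (determinant) function of the positions of its three points, a change of order type forces some triple to flip its orientation, and by the intermediate value theorem such a triple must pass through a collinear configuration. Hence there is a first time $t_0>0$ at which some subset of $S(t)$ becomes collinear; by the admissibility condition that at most three points are ever collinear, this subset is exactly a triple. Relabeling its points so that the middle one lies on the segment spanned by the other two, I obtain a triple $(a,b,c)$ that is collinear at $t_0$ with $c$ on the segment $ab$. This is precisely the hypothesis of Proposition~\ref{prop:exit_edge_continuous_movement}, which then tells us that $ab$ is an exit edge of $S(0)=S$, and therefore $ab$ is an edge of the exit graph $G$.

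To close the argument I would exploit the straightness constraint. Because $ab\in G$, the isotopy keeps the image of the original segment $\overline{a_0b_0}$ straight for all $t$, so at time $t_0$ this image is exactly the straight segment $\overline{ab}$ joining $f(a_0,t_0)$ and $f(b_0,t_0)$. Since $c=f(c_0,t_0)$ lies on $\overline{ab}$ and $f(\cdot,t_0)$ is a homeomorphism, injectivity forces the preimage $c_0$ to lie on the original segment $\overline{a_0b_0}$. But $S=S(0)$ is in general position, so the distinct points $a_0,b_0,c_0$ are not collinear and $c_0\notin\overline{a_0b_0}$ --- a contradiction. Thus no admissible isotopy can change the order type, and $G$ is supporting.

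Given that Proposition~\ref{prop:exit_edge_continuous_movement} does the heavy lifting, I do not expect a serious obstacle here; the only genuinely delicate point is the transition in the second step. One must justify that an order-type change really does produce a first collinear moment realized by a single triple with a well-defined middle point, so that the hypothesis of Proposition~\ref{prop:exit_edge_continuous_movement} applies verbatim, rather than having several triples degenerate simultaneously. The admissibility bound of at most three collinear points is exactly what excludes the latter, and the continuity of triple orientations supplies the first crossing time; once these are in place, the concluding contradiction via injectivity of the homeomorphism is immediate.
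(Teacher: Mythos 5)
Your argument is correct and follows exactly the route the paper intends: the corollary is stated as an immediate consequence of Proposition~\ref{prop:exit_edge_continuous_movement}, and you supply the omitted details (the intermediate-value argument producing a first collinear triple, and the injectivity of $f(\cdot,t_0)$ combined with the straightness of the exit edge $ab$ to derive the contradiction with general position). Nothing is missing; this is the same approach, just written out in full.
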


A line separates $c$ from the relative 
interior of $ab$ if and only if there is such a separating line through $a$ or $b$.
This may suggest that the exit edges are necessary for 
a supporting graph.
However, this is not true in general.
For example, in \fig{fig:not_nec} (left), we see 
a construction by Ringel~\cite{ringel}: 
$ab$ is an exit edge with
witness $c$, but $c$ cannot move over $ab$ 
without violating Pappus' theorem. 
In this situation, we might consider 
the \emph{abstract order type} for the 
triple orientations we would obtain after moving~$c$ over $ab$. 
Since there is no planar point set with 
this set of triple orientations, 
this abstract order type is not \emph{realizable}. 
Deciding realizability is (polynomial-time-)equivalent 
to the existential theory of the reals~\cite{mnev}. 
We will revisit these concepts in Section~\ref{sec:bounds}.
\begin{figure}[bth]
	\centering	
	\includegraphics[page=1]{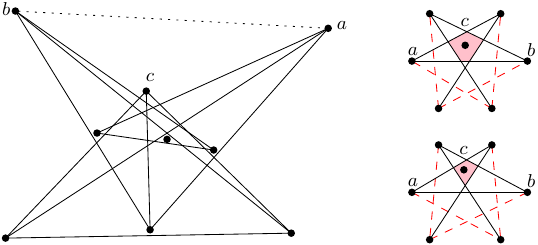}
	\caption{Left: moving $c$ over $ab$ to orient $(a,b,c)$ 
		clockwise, without changing the orientation of other 
		triples, would contradict Pappus's theorem~\cite{ringel}.
		Right: it is not always possible to move a witness $c$ continuously to the corresponding exit edge $ab$.
	}\label{fig:not_nec}
\end{figure}

We note that there are point sets where two or more 
other exit edges prevent a witness $c$ from crossing its corresponding exit 
edge $ab$; see, for example, \fig{fig:not_nec} (bottom right). 
Since the two geometric graphs in \fig{fig:not_nec} (right) are not isomorphic, they cannot be transformed into each other by a continuous deformation as the one used in Definition~\ref{def:supporting}.
However, in this example, 
while $c$ cannot move to $ab$ without changing the order 
type in \fig{fig:not_nec} (bottom right), if $ab$ were not
present, we could first change the point 
set to the one in \fig{fig:not_nec} (top right) and then move $c$ over $ab$.
Thus, $ab$ indeed has to be in a supporting graph.

\section{Exit edges and empty triangular cells}\label{sec:empty_triangles}

The (real) \emph{projective plane} $\PP^2$ is a 
non-orientable surface obtained by augmenting the Euclidean plane $\RR^2$ 
by a \emph{line at infinity}. 
This line has one \emph{point at infinity} for each direction, where
all parallel lines with this direction intersect. Thus, 
in $\PP^2$, each pair of parallel lines intersects in a unique point.

For a point set $S$ in the Euclidean plane, add a line 
$\ell_\infty$ to obtain the projective plane.
We use a duality transformation
that maps a point $s$ of~$\PP^2$ to a line $s^*$ in~$\PP^2$. 
In this way, we get a set of lines $S^*$ dual to~$S$,  
giving a projective line arrangement~$\AA$. 
The removal of a line from~$\AA$ does not disconnect~$\PP^2$. 
Since~$\PP^2$ has non-orientable genus 1, 
removing any two lines $\ell_1$ and $\ell_2$ from 
$\PP^2$ disconnects it into two components.
We call the closure of each of the two components a 
\emph{halfplane}\footnote{Here we follow the notation in~\cite{Gruenbaum_aas}. In the literature halfplanes are also called \emph{wedges}.} determined by $\ell_1$ and~$\ell_2$. 
The \emph{marked cell}~$c_{\infty}$ 
is the cell of~$\AA$ that contains the point $\ell_\infty^*$ 
dual to the line~$\ell_\infty$.
By appropriately choosing the duality transformation, 
we can assume that $\ell_\infty^*$ lies at vertical infinity.
We denote by $w(\ell_1,\ell_2)$ the halfplane
determined by $\ell_1$ and $\ell_2$ that does not contain the marked cell.

The combinatorial structure of~$\AA$, together with 
the marked cell, determines the order type of $S$.
We show how to identify exit edges and their 
witnesses in dual line arrangements.

We use the marked cell $c_{\infty}$ to 
orient the lines from~$S^*$: 
first, we orient the lines on the boundary of $c_{\infty}$
in one direction.
Then, we iteratively remove lines that have already been oriented,
and we define the orientation for the remaining lines from~$S^*$
by considering the new lines on the boundary of $c_{\infty}$.
Then, $c_{\infty}$ is the only cell
whose boundary is \emph{oriented consistently}, 
that is, it can be traversed completely
along the resulting orientation.
In particular,
for an unmarked triangular cell $\triangle$ in~$\AA$, 
the directed edges of $\triangle$ form a transitive order on its vertices, 
with a unique vertex of~$\triangle$
in the middle.
We call this vertex the \emph{exit vertex} of~$\triangle$
and the line through the other two vertices of~$\triangle$
the \emph{witness line} of~$\triangle$.

Note that if we consider the duality mapping a point $p=(p_x,p_y)$ from the real plane to 
the (non-vertical) line $p^*: y=p_xx - p_y$, 
then the described orientation procedure corresponds to orienting these dual lines from left to right.

Note that for two points $p,q\in S$ and their dual lines $p^*,q^*\in S^*$, 
$w(p^*,q^*)$ does not contain the marked cell and therefore 
its boundary is not oriented consistently.
 
The next theorem characterizes exit edges and their witnesses in the dual. 
In its proof we use the following property of projective duality:   
since it preserves incidences, 
the condition that no line spanned by two points of $S$ 
intersects the edge $pq$ 
is equivalent in $S^*$ to $w(p^*,q^*)$ not containing 
any vertex of~$\AA$.

\begin{theorem}\label{prop:duality_properties}
	Let $S \subset \RR^2$ be in general position,
	and let $a, b,c \in S$.
	Then, $ab$ is an exit edge with witness $c$ if and only if 
	the lines $a^*$, $b^*$, and $c^*$ 
	bound an unmarked triangular cell $\triangle$ in the 
	arrangement $\mathcal{A}$ of lines from $S^*$ so that $c^*$ 
	is the witness line of~$\triangle$
	and  the point $\overline{ab}^*=a^* \cap b^*$ 
	is the exit vertex of~$\triangle$.
\end{theorem}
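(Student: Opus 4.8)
The plan is to translate the primal exit-edge condition into the dual arrangement using the standard properties of the duality $p\mapsto p^*\colon y=p_xx-p_y$. The two facts I would rely on are that this duality is an incidence-preserving involution and that it preserves the above/below relation, i.e.\ a point $u$ lies above the line $v^*$ if and only if $v$ lies above $u^*$. From the second fact I would first derive the key translation of separation into vertical betweenness: since the dual of the primal line $\overline{ap}$ is the point $a^*\cap p^*$, the line $\overline{ap}$ separates $b$ from $c$ if and only if the point $a^*\cap p^*$ lies (vertically) strictly between the lines $b^*$ and $c^*$, and symmetrically $\overline{bp}$ separates $a$ from $c$ if and only if $b^*\cap p^*$ lies strictly between $a^*$ and $c^*$. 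Recalling the double-wedge reformulation of Definition~\ref{def:exit-edge}, $ab$ is an exit edge with witness $c$ precisely when no dual line $p^*$ (for $p\in S$) meets $a^*$ between $b^*$ and $c^*$, and none meets $b^*$ between $a^*$ and $c^*$.

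Next I would identify the relevant triangular cell. A short computation with the linear functions $a^*(x)-b^*(x)$ and $a^*(x)-c^*(x)$ shows that the locus of points on $a^*$ lying vertically between $b^*$ and $c^*$ is exactly one of the two projective arcs of $a^*$ bounded by the vertices $a^*\cap b^*=\overline{ab}^*$ and $a^*\cap c^*=\overline{ac}^*$; likewise the points of $b^*$ between $a^*$ and $c^*$ form one arc bounded by $\overline{ab}^*$ and $\overline{bc}^*$. These two arcs share the vertex $\overline{ab}^*$, approach it from opposite sides in the $x$-coordinate, and together with the corresponding arc of $c^*$ bound a single triangular cell $\triangle_0$ of the subarrangement $\{a^*,b^*,c^*\}$. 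Orienting the dual lines from left to right as in the remark preceding the theorem, the vertex $\overline{ab}^*$ is the head of one of its two incident edges and the tail of the other, so it is the middle vertex of the induced transitive order; hence $\overline{ab}^*$ is the exit vertex of $\triangle_0$, the opposite side $c^*$ is its witness line, and $\triangle_0$ is unmarked (its boundary is not a directed cycle). Carrying out this identification carefully through the point at infinity---so that it remains valid in the case where one of the two arcs passes through vertical infinity---is the step I expect to be the main obstacle.

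With the cell pinned down, the emptiness bookkeeping is routine. The two arcs meeting at $\overline{ab}^*$ are exactly the two edges of $\triangle_0$ incident to its exit vertex, so by the first paragraph $ab$ is an exit edge with witness $c$ if and only if no line of $S^*$ crosses either of these two edges. Since a line meets the interior of a triangle if and only if it crosses exactly two of its three edges, and two of the three edges cannot be chosen while avoiding the two edges at $\overline{ab}^*$, the absence of crossings on these two edges is equivalent to no line of $S^*$ meeting the interior of $\triangle_0$ at all; that is, to $\triangle_0$ being a triangular cell of the full arrangement $\AA$. This yields the forward implication.

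For the converse I would use that among the four triangular cells of $\{a^*,b^*,c^*\}$ in $\PP^2$ exactly one is the marked cell (whose boundary is consistently oriented and thus has no exit vertex), while the remaining three unmarked cells realise the three vertices as their exit vertices in a one-to-one fashion. Consequently an unmarked triangular cell of $\AA$ bounded by $a^*,b^*,c^*$ with exit vertex $\overline{ab}^*$ and witness line $c^*$ must coincide with $\triangle_0$; reading the chain of equivalences backwards then shows that both double-wedges are empty, i.e.\ that $ab$ is an exit edge with witness $c$.
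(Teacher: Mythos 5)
Your proposal follows essentially the same route as the paper's proof: translate the two empty double-wedges of Definition~\ref{def:exit-edge} into the statement that the two sides lying on $a^*$ and $b^*$ of the triangle $w(a^*,c^*)\cap w(b^*,c^*)$ carry no crossings with lines of $S^*$, observe that this forces that triangle to be a cell of $\AA$, and read off $\overline{ab}^*$ as its exit vertex. Your ``vertical betweenness'' arcs are exactly the paper's halfplanes $w(\cdot,\cdot)$ not containing the marked cell (for this duality the marked cell sits at vertical infinity, so ``between $b^*$ and $c^*$'' and ``in the halfplane of $b^*,c^*$ avoiding $c_\infty$'' coincide), and your parity argument for ``two uncrossed sides imply a cell'' as well as the explicit converse via the four triangles of the subarrangement $\{a^*,b^*,c^*\}$ are useful elaborations of steps the paper leaves to a figure.

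One inference does not hold as written. From ``$\overline{ab}^*$ is the head of one of its two incident edges and the tail of the other'' you conclude that it is the middle vertex of a transitive order and, from that, that the boundary of $\triangle_0$ is not a directed cycle. But in a consistently oriented (cyclic) triangle boundary \emph{every} vertex is the head of one incident edge and the tail of the other, so this local in-/out-degree count cannot by itself exclude the marked cell; as stated, the argument assumes transitivity in order to deduce it. The gap is easy to close: $\triangle_0$ is contained in $w(b^*,c^*)$ (every point of $\triangle_0$ lies vertically between $b^*$ and $c^*$), hence it does not contain $\ell_\infty^*$ and cannot be the marked cell; since the marked cell is the only consistently oriented cell, the boundary of $\triangle_0$ is transitive, and your degree count then correctly pins $\overline{ab}^*$ as the middle, i.e., exit, vertex. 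With that repair (and granting the standard fact, which you use without proof, that the three unmarked triangles of a three-line subarrangement have the three distinct vertices as their exit vertices), the proof is complete and matches the paper's.
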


\begin{proof}
	Let $\triangle$ be the triangular region determined by the intersection of the two halfplanes $w(a^*,c^*)$ and $w(b^*,c^*)$. %
	By the projective duality, $ab$ is an exit edge with witness $c$ in $S$ if and only if  
	no line of $S^*$ intersects $a^*$ inside $w(b^*,c^*)$ or $b^*$ inside $w(a^*,c^*)$. 
	In other words, if and only if two sides of $\triangle$, lying on $a^*$ and $b^*$, 
	contain no intersection with lines from~$S^*$. 
	This is equivalent to $\triangle$ being a cell of the arrangement $\mathcal{A}$. 
	Moreover, we can recognize $a^*$ and $b^*$ in $S^*$. 
	In the triangular cell~$\triangle$ that is the intersection of $w(a^*,c^*)$ and $w(b^*,c^*)$ the exit vertex is the intersection of	$a^*$ and $b^*$; see \fig{fig:triangle_lines}. 
	Consequently, the exit vertex $a^* \cap b^*$ is the dual of the line containing the exit edge~$ab$ (and vice versa). 
\end{proof}
\begin{figure}[htb]
	\centering
	\includegraphics[page=1]{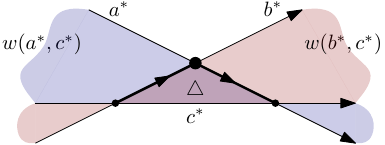}
	\caption{An illustration of the proof of Theorem~\ref{prop:duality_properties}. 
		If $ab$ is an exit edge with witness $c$ in~$S$, 
		then the two bold drawn segments of the corresponding triangular cell are unintersected, 
		and thus, bound an unmarked triangular cell in~$S^*$. 
		The exit vertex is represented with a black disk.}\label{fig:triangle_lines}
\end{figure}

Since line arrangements can be efficiently constructed in $O(n^2)$ time~\cite{arrangementsR2,arrangementsRd}, 
Theorem~\ref{prop:duality_properties} can be used to efficiently compute the set of exit edges. 

\begin{corollary}
	Let $S \subset \RR^2$ be a set of $n$ points in general position.
	Then the exit edges of $S$ can be enumerated in $O(n^2)$ time by constructing
	the dual line arrangement of $S$ and checking which cells are unmarked triangular cells. 
\end{corollary}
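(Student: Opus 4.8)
The plan is to convert the combinatorial equivalence of Theorem~\ref{prop:duality_properties} directly into an algorithm that reads off the unmarked triangular cells of the dual arrangement. First I would fix the concrete point--line duality from the remark preceding the theorem, mapping each $p=(p_x,p_y)\in S$ to the non-vertical line $p^*\colon y = p_x x - p_y$; a generic rotation of $S$ beforehand guarantees that no two points share an $x$-coordinate and that no three dual lines are concurrent, which puts $\AA$ in general position. Then $S^*$ is a set of $n$ lines whose arrangement $\AA$ can be constructed in $O(n^2)$ time and space with a standard incremental line-arrangement sweep, yielding a planar subdivision with $O(n^2)$ vertices, edges, and cells together with the adjacency information needed to walk around each cell.

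With this duality fixed, the iterative orienting procedure described before Theorem~\ref{prop:duality_properties} coincides, as the remark notes, with orienting every line of $S^*$ from left to right, so there is no separate orientation phase: each line is simply directed by increasing $x$. Next I would locate the marked cell $c_\infty=\ell_\infty^*$. Since $\ell_\infty^*$ sits at vertical infinity, $c_\infty$ is the projective cell that, in the affine picture, consists of the unbounded region above the upper envelope of $S^*$ together with the unbounded region below the lower envelope, the two being joined through vertical infinity; this cell is identified in $O(n)$ time from the envelopes. All remaining cells are unmarked.

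I would then traverse every cell of $\AA$ exactly once. For each cell I test, in time proportional to its boundary length, whether it is a triangle, i.e.\ whether its boundary consists of exactly three edges lying on three distinct lines; since the total boundary length summed over all cells is $O(n^2)$, this traversal costs $O(n^2)$. For each unmarked triangular cell $\triangle$, the left-to-right orientation of its three edges forms a transitive tournament on its three vertices, and in $O(1)$ I extract its middle (exit) vertex $a^*\cap b^*$ and the opposite (witness) line $c^*$. By Theorem~\ref{prop:duality_properties} the pair $(ab,c)$ is then an exit edge with witness $c$, and conversely every exit edge with a prescribed witness arises from exactly one such cell; collecting these pairs enumerates all exit edges, each reported at most twice since an exit edge has at most two witnesses.

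The main obstacle is not any deep difficulty but the bookkeeping at the line at infinity: Theorem~\ref{prop:duality_properties} lives in $\PP^2$, whereas a practical arrangement structure is affine, so a projective triangular cell may appear affinely unbounded, wrapping around $\ell_\infty$, and the marked cell spans two affine unbounded regions. Choosing $\ell_\infty^*$ at vertical infinity keeps these cases uniform---unbounded affine regions close up through infinity and can be glued in $O(n^2)$ total time while sweeping---and the initial generic rotation ensures that each exit vertex $a^*\cap b^*$ is a genuine (finite) arrangement vertex. Once this affine-versus-projective bookkeeping is settled, correctness is immediate from Theorem~\ref{prop:duality_properties} and the $O(n^2)$ running time follows from the quadratic size of an arrangement of $n$ lines.
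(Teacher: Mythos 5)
Your proposal is correct and follows exactly the route the paper intends: the corollary is stated without a separate proof because it is immediate from Theorem~\ref{prop:duality_properties} together with the standard $O(n^2)$ incremental construction of the dual line arrangement, which is precisely what you spell out. Your additional bookkeeping about the marked cell at vertical infinity and the left-to-right orientation is a faithful elaboration of the remarks preceding the theorem rather than a different argument.
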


\section{On the number of exit edges}\label{sec:bounds}

Line arrangements can be generalized to so-called pseudoline arrangements.
A \emph{pseudoline} is a closed curve in the projective plane~$\PP^2$ 
whose removal does not disconnect $\PP^2$.
A set of pseudolines in~$\PP^2$,
where any two pseudolines cross exactly once,
determines a (projective) \emph{pseudoline arrangement}. 
If no three pseudolines intersect in a common point,
the pseudoline arrangement is \emph{simple}. 
All notions that we have introduced for line arrangements,
such as consistent orientations, exit vertices, or witness lines, 
naturally extend to pseudolines.

Two pseudoline arrangements are isomorphic if there is an isomorphism of the cell complexes into which they partition $\PP^2$. 
A pseudoline arrangement is \emph{stretchable} if it is 
isomorphic to a line arrangement, that is,
the corresponding cell complexes into which the
two arrangements partition $\PP^2$ 
are isomorphic.
Deciding if a pseudoline arrangement is stretchable is 
(polynomial-time-)equivalent to the existential theory of the 
reals~\cite{FelsnerGoodman2017,mnev}. 
The combinatorial dual analogues of line arrangements 
and pseudoline arrangements are order types and abstract order types,
respectively.

As a consequence of Theorem~\ref{prop:duality_properties},
the maximum number of \emph{triangular cells} in a simple 
projective pseudoline arrangement gives an upper bound on 
the number of exit edges of a point set. 
However, one triangular cell could be $c_\infty$, and there could 
be pairs of triangular cells with the same exit vertex.
We call a configuration of the latter type an \emph{hourglass}; 
see \fig{fig:hourglass}.
We say that the two pseudolines $p$ and $q$ 
that define the exit vertex of the two triangular cells of 
an hourglass $H$ \emph{slice}~$H$ and that $H$ is 
\emph{sliced} by $p$ and by~$q$.
\begin{figure}[tbh]
	\centering
	\hfill
	\begin{minipage}{.45\textwidth}
		\centering
		\includegraphics[page=1]{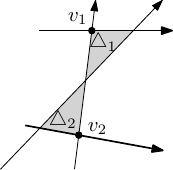}
		\label{fig:hourglass_1}  
	\end{minipage}
	\hfill
	\begin{minipage}{.45\textwidth}
		\centering
		\includegraphics[page=2]{hourglass}
		\label{fig:hourglass_2}  
	\end{minipage}
	\hfill
	
	\caption{
		Left: the two triangular cells $\triangle_1$ and $\triangle_2$ do 
		not form an hourglass, because they share a vertex that is not an exit vertex. 
		Right: the two triangular cells $\triangle_1$ and $\triangle_2$ 
		form an hourglass because they share an exit vertex. 
	}\label{fig:hourglass}
\end{figure}
\begin{observation}\label{obs:one_houglass}
	A triangular cell can be a part of at most one hourglass.
\end{observation}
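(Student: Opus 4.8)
The plan is to exploit that, by definition, every unmarked triangular cell $\triangle$ has a \emph{unique} exit vertex, namely the middle element of the transitive order induced on its three vertices by the consistent orientation. Since an hourglass is an unordered pair of triangular cells with a common exit vertex, any hourglass containing $\triangle$ must pair $\triangle$ with some other triangular cell whose exit vertex equals the exit vertex $v$ of $\triangle$. Hence it suffices to show that there is at most one triangular cell different from $\triangle$ whose exit vertex is $v$.

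To this end I would analyze the arrangement locally at $v$. Writing $v = p \cap q$, the simplicity of the arrangement guarantees that $p$ and $q$ are the only two pseudolines through $v$, so in a small neighborhood of $v$ they cut the plane into exactly four wedges, each belonging to one cell having $v$ as a corner. The two edges of such a cornering cell at $v$ lie on one ray of $p$ and one ray of $q$. Using the consistent orientation of $p$ and $q$, an edge at $v$ is directed \emph{into} $v$ precisely when it lies on the backward ray of its pseudoline, and \emph{out of} $v$ precisely when it lies on the forward ray. By the description of the exit vertex as the middle of the transitive order, $v$ is the exit vertex of a cornering triangle exactly when one of its two edges at $v$ points into $v$ while the other points out of $v$.

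Carrying out this labeling over the four wedges (the forward and backward rays of $p$ and $q$ alternate in the cyclic order around $v$) I expect to find that the two wedges bounded by one forward ray and one backward ray are precisely the two vertically opposite wedges, while in the remaining two wedges both edges point the same way, making $v$ a source or a sink rather than a middle vertex. Consequently, of the four cells cornering $v$, only the two occupying opposite wedges can have $v$ as their exit vertex. As $\triangle$ occupies one of them, the sole candidate for a partner is the single cell cornering $v$ in the opposite wedge; it yields one hourglass if it is triangular and none otherwise, so $\triangle$ belongs to at most one hourglass. The bookkeeping of the four wedges is routine; the step I expect to require the most care is justifying cleanly that the boundary orientation of a triangle agrees edge-by-edge with the global orientation of the pseudolines, so that the ``one edge in, one edge out'' criterion can indeed be read off directly from the forward and backward rays meeting at $v$.
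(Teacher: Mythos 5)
Your argument is correct: the paper states this observation without proof, and your local analysis at the shared exit vertex --- of the four cells cornering $v = p \cap q$ in a simple arrangement, only the two lying in opposite wedges bounded by one forward and one backward ray can have $v$ as the middle (exit) vertex of their transitive order, so $\triangle$ has at most one possible partner --- is precisely the justification the paper leaves implicit. The step you flag as delicate, namely that each boundary edge of a triangular cell is directed by the global orientation of the pseudoline containing it, is immediate from the paper's definition of the directed edges of a cell, so there is no gap.
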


\begin{observation}\label{obs:hourglasses}
	An exit edge $ab$ with two witness points is dual 
	to an hourglass with exit vertex $\overline{ab}^*$.
\end{observation}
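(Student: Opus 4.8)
The plan is to apply Theorem~\ref{prop:duality_properties} separately to each of the two witnesses and then to recognize the resulting pair of triangular cells as an hourglass. Concretely, write the two witnesses of the exit edge $ab$ as $c_1$ and $c_2$ with $c_1 \neq c_2$. Applying Theorem~\ref{prop:duality_properties} to the pair consisting of the exit edge $ab$ and the witness $c_1$ produces an unmarked triangular cell $\triangle_1$ bounded by $a^*$, $b^*$, and $c_1^*$, whose witness line is $c_1^*$ and whose exit vertex is $a^* \cap b^* = \overline{ab}^*$. Applying the same theorem to $ab$ and $c_2$ produces a second unmarked triangular cell $\triangle_2$ bounded by $a^*$, $b^*$, and $c_2^*$, with witness line $c_2^*$ and the very same exit vertex $\overline{ab}^*$.

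It then remains to check that $\triangle_1$ and $\triangle_2$ are genuinely two different cells, for only then do they form an hourglass rather than trivially coinciding. Since $c_1 \neq c_2$, their dual lines satisfy $c_1^* \neq c_2^*$, so the two bounding triples $\{a^*, b^*, c_1^*\}$ and $\{a^*, b^*, c_2^*\}$ differ and the cells cannot be equal. Two distinct triangular cells that share an exit vertex are by definition an hourglass, and its exit vertex is the common one, namely $\overline{ab}^*$, which is exactly the claim.

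To confirm that this configuration really has the bowtie shape suggested by \fig{fig:hourglass}, I would examine the four wedges around the crossing $\overline{ab}^*$ of the globally oriented lines $a^*$ and $b^*$. The exit-vertex condition at $\overline{ab}^*$ requires one of the two incident triangle edges to be incoming and the other outgoing along these fixed orientations, which holds in precisely the two vertically opposite wedges; since each wedge contains a unique cell incident to the vertex, the distinct cells $\triangle_1$ and $\triangle_2$ must occupy these two opposite wedges. I expect no serious obstacle here: once Theorem~\ref{prop:duality_properties} is available, the entire statement reduces to the elementary observation that distinct witnesses yield distinct cells sharing the exit vertex $\overline{ab}^*$, so the only point genuinely requiring care is the distinctness argument in the second paragraph.
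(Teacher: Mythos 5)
Your proposal is correct and follows exactly the reasoning the paper intends for this observation, which is stated without proof as an immediate consequence of Theorem~\ref{prop:duality_properties}: each witness yields an unmarked triangular cell with exit vertex $\overline{ab}^*$, the two cells are distinct because their witness lines differ, and a pair of distinct triangular cells sharing an exit vertex is by definition an hourglass. The additional wedge analysis in your last paragraph is sound but not needed.
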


Any projective arrangement of $n \ge 4$ lines has at least 
$n$ triangular cells, as each line is incident to at least 
three triangular cells~\cite{levi}. This is known to be tight.
Therefore, taking into account the marked cell $c_\infty$ and 
possible hourglasses, any set of $n \ge 4$ points has at least 
$\lceil \frac{n - 1}{2} \rceil$ exit edges.
We improve this lower bound by bounding from below the 
difference between the number of triangular cells and the number of 
hourglasses.

\begin{proposition}\label{prop:lower_bound}
	Any set of $n \ge 4$ points in the plane has at least 
	$(3n-7)/5$ exit edges.
\end{proposition}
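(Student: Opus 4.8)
The plan is to pass to the dual pseudoline arrangement $\mathcal{A}$ of $S$ and count triangular cells. By Theorem~\ref{prop:duality_properties}, the exit edges of $S$ correspond bijectively to the vertices of $\mathcal{A}$ that are the exit vertex of at least one unmarked triangular cell, and such a vertex is the exit vertex of \emph{two} triangular cells exactly when it slices an hourglass (Observation~\ref{obs:hourglasses}). Let $t'$ denote the number of unmarked triangular cells and $h$ the number of hourglasses. Since the marked cell $c_\infty$ is oriented consistently, it has no exit vertex and hence lies in no hourglass, so every hourglass merges two unmarked triangular cells into a single exit edge while every remaining unmarked triangular cell accounts for its own exit edge. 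Counting cells by their exit vertex therefore gives $t' = (\text{number of exit edges}) + h$, that is, the number of exit edges equals $t'-h$, and the goal becomes to prove $t'-h \ge (3n-7)/5$.

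I would derive this from two inequalities. The first is the Levi-type bound quoted before the statement: each of the $n$ pseudolines bounds at least three triangular cells~\cite{levi}, so the total number of triangular cells is at least $n$, and subtracting the at most one triangular marked cell gives $t' \ge n-1$. The second, and the real content, strengthens the trivial estimate $2h \le t'$ (valid because each triangular cell lies in at most one hourglass, Observation~\ref{obs:one_houglass}) to $5h \le 2t'$. Granting both, $t'-h \ge t' - \tfrac{2}{5}t' = \tfrac{3}{5}t' \ge \tfrac{3}{5}(n-1) = (3n-3)/5 \ge (3n-7)/5$, which is exactly the claim with room to spare; the surplus in the additive constant is what lets one absorb small values of $n$ and the boundary cases that arise in establishing the second inequality.

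Note that the naive bound $2h \le t'$ together with $t' \ge n-1$ only yields $t'-h \ge t'/2 \ge (n-1)/2$, the estimate already mentioned in the text, so everything hinges on the strengthening $5h \le 2t'$. Writing $a := t'-2h$ for the number of unmarked triangular cells lying in no hourglass, this inequality is equivalent to $h \le 2a$: the free triangular cells must be at least half as numerous as the hourglasses. The strategy for this is a charging argument. To each hourglass I want to assign free triangular cells forced to appear in its neighbourhood, in such a way that every free cell is charged by at most two hourglasses. For an hourglass with exit vertex $v$, its two triangles lie in opposite wedges at $v$ and are bounded by the two witness lines; orienting the pseudolines as in Section~\ref{sec:empty_triangles} and following the two slicing pseudolines away from $v$, the Levi condition forces further triangular cells on these lines, and such a cell cannot again be one of the two triangles of an hourglass sliced at the \emph{same} vertex. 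Charging the hourglass to these cells, and showing that along any pseudoline consecutive sliced hourglasses must be separated by intervening witness or free triangles, should yield the ratio $h \le 2a$ after summing over all $n$ pseudolines.

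The main obstacle is precisely this local analysis. The trivial bound $2h \le t'$ uses nothing about the arrangement beyond disjointness of the cell pairs, whereas to reach the constant $3/5$ one must convert Levi's \emph{global} guarantee of three triangles per line into a \emph{local} spacing statement: a single pseudoline may slice many hourglasses, and one has to prove that they are spread out along the line by enough free or witness triangular cells, while simultaneously controlling how often a given free cell is charged by hourglasses sliced by different lines. Getting this bookkeeping to close with the right constant --- and handling the exceptional cases of the marked cell, of hourglasses whose witness lines interact, and of small $n$ --- is where the effort goes, and it is also what pins down the exact additive term in the statement.
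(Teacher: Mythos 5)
Your reduction to the dual arrangement is correct, and so is the identity (number of exit edges) $= t'-h$, as well as the bound $t'\ge n-1$ from Levi's theorem. But the inequality on which your whole argument rests, $5h\le 2t'$ (equivalently $h\le \tfrac{2}{5}t'$), is false in general, so no amount of work on the charging argument can close the gap. The paper itself supplies the counterexample: in the arrangements of $n$ pseudolines with $n(n-1)/3$ triangular cells (Roudneff, Harborth), almost all triangular cells pair up into hourglasses, so $h\approx t'/2$ and hence $5h\approx \tfrac{5}{2}t' > 2t'$. This is exactly why the dual point sets of those arrangements have only about $n^2/6$ exit edges rather than $\tfrac{3}{5}\cdot\tfrac{n^2}{3}$. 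A ratio bound between $h$ and $t'$ cannot be the right intermediate statement; the correct quantity to bound is a \emph{linear combination} of $T$ and $H$ against a linear function of $n$, not $H$ against a fraction of $T$.

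The paper's route is: assign to each pseudoline $\ell_i$ the quantity $x_i=t_i-h_i/2$, where $t_i$ counts incident triangular cells and $h_i$ counts hourglasses sliced by $\ell_i$, and show by a case analysis (using Gr\"unbaum's lemma that a halfplane bounded by $\ell_i,\ell_j$ containing a crossing in its interior contains a triangle on $\ell_i$ avoiding $\ell_j$) that $x_i\ge 3$ except for at most four exceptional pseudolines with $x_i\ge 5/2$. Summing gives $3T-H\ge 3n-2$, and only then is the trivial bound $T\ge 2H$ invoked, via $T-H=\frac{(3T-H)+2(T-2H)}{5}\ge\frac{3n-2}{5}$, with one more unit subtracted for the possibly triangular marked cell. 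Your instinct that one must localize Levi's guarantee along each pseudoline and control how hourglasses consume the guaranteed triangles is in the right direction, but the bookkeeping must be organized per pseudoline around $t_i-h_i/2$ (whose sum is $3T-H$), not as a global density bound $h\le 2a$; the latter is simply not true.
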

For the proof of Proposition~\ref{prop:lower_bound} 
we use the following two lemmas.
The first is a theorem by Gr\"unbaum~\cite[Theorem~3.7 on p.~50]{Gruenbaum_aas}, 
and the second can be derived from the proof of that theorem.

\begin{lemma}[{Gr\"unbaum~\cite{Gruenbaum_aas}}]\label{lem:triang_pseudoline}
	In a simple pseudoline arrangement $L$ 
	every pseudoline from $L$ is incident to at least three triangular cells.
\end{lemma}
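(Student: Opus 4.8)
The plan is to fix a pseudoline $\ell\in L$ (we may assume $|L|\ge 3$, since otherwise there are no triangular cells at all) and count the triangular cells of $L$ that have $\ell$ on their boundary. First I would apply a projective transformation sending $\ell$ to the line at infinity $\ell_\infty$; this turns the remaining $m:=|L|-1\ge 2$ pseudolines into an affine simple arrangement $\AA'$ in $\RR^2$. A triangular cell of $L$ incident to $\ell$ has exactly one edge on $\ell_\infty$, so in $\AA'$ it becomes an \emph{unbounded} cell bounded by exactly two arcs meeting in a single vertex; call such a cell a \emph{wedge}. Conversely, every wedge closes up, via the edge of $\ell_\infty$ joining its two ends, to a triangular cell of $L$ incident to $\ell$ (the two ends lie at distinct points of $\ell_\infty$, as no three pseudolines are concurrent). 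Hence it suffices to show that $\AA'$ has at least three wedges. Note that each pseudoline meets $\ell_\infty$ once, so its two ends point in antipodal directions; the $2m$ ends thus split the circle $S^1$ of directions into $2m$ arcs, one per unbounded cell.

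For a generic direction $\theta\in S^1$ (not a direction of any end) I would consider the vertex $v(\theta)$ that is \emph{last} in a topological sweep of $\AA'$ in direction $\theta$, i.e.\ the extreme vertex in direction $\theta$. Writing $v(\theta)=p\cap q$, the key claim is that the \emph{forward cone} at $v(\theta)$ — the region bounded by the two rays of $p$ and $q$ that leave $v(\theta)$ toward $\theta$ — is empty of every other pseudoline, and is therefore a wedge whose opening arc contains $\theta$. Indeed, any pseudoline entering this cone must cross one of the two forward rays at a vertex lying strictly beyond $v(\theta)$ in the sweep order, contradicting the extremality of $v(\theta)$. Letting $\theta$ range over all of $S^1$ then shows that the opening arcs of the wedges of $\AA'$ cover $S^1$.

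To finish, I would bound the length of each opening arc. The two ends of $p$ are antipodal, and so are those of $q$, so the four end-directions form two antipodal pairs, cutting $S^1$ into four arcs in which opposite arcs have equal length. The forward opening arc is one of these; together with an adjacent arc it spans exactly half the circle, so its length is strictly less than $\pi$ (it is nonzero because $p$ and $q$ have distinct directions at infinity). Since arcs each of length less than $\pi$ cannot cover $S^1$ unless there are at least three of them, $\AA'$ has at least three wedges, yielding at least three triangular cells of $L$ incident to $\ell$.

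The main obstacle is the emptiness claim in the second step. For straight lines the forward ray is monotone in direction $\theta$ and the contradiction is immediate, but pseudolines may wiggle, so ``beyond $v(\theta)$'' must be read in the combinatorial sweep order rather than metrically. Making $v(\theta)$ and this order well defined, and continuous in $\theta$ so that the opening arcs genuinely cover $S^1$, is where I would invest the most care, relying on the topological-sweep (allowable-sequence) formalism for pseudoline arrangements. A fallback would be an induction that deletes a pseudoline distinct from $\ell$; there, however, one must track the triangles of $\ell$ destroyed upon reinsertion through a zone argument, which is harder to account for exactly.
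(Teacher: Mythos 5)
The paper offers no proof of this lemma --- it is quoted directly from Gr\"unbaum's monograph (Theorem~3.7 there, essentially Levi's classical result) --- so there is nothing in the paper to compare your argument against; I can only assess it on its own terms. Your first paragraph is correct and standard: after sending $\ell$ to infinity, triangular cells at $\ell$ correspond exactly to wedges of the remaining affine arrangement, and the $2m$ ends partition the circle at infinity into $2m$ arcs, one per unbounded cell.

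The gap is in the second paragraph, and it is fatal already for straight lines, independently of the pseudoline/sweep subtleties you flag at the end. The forward cone at the $\theta$-extreme vertex $v(\theta)=p\cap q$ is indeed an empty wedge, but its opening arc is the short arc between the two \emph{forward end-directions} of $p$ and $q$, and that arc need not contain $\theta$: both forward directions lie in the open half-circle around $\theta$, but $\theta$ itself need not lie between them. Concretely, take the three lines $y=0.1x$, $y=0.2x-3$, $y=0.3x-3$ and $\theta=\text{north}$: the topmost vertex is $(30,3)$, the crossing of the first two lines, and its forward cone is the thin sliver between them opening east-north-east, with opening arc roughly $(5.7^\circ,11.3^\circ)$ --- nowhere near north. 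More structurally, an affine arrangement of three lines has six unbounded cells but only three wedges (the vertical angles at the three vertices of the triangle); their opening arcs are antipodal to the triangle's interior angles and so have total length exactly $\pi$, so the wedge arcs \emph{never} cover $S^1$, and the final counting step collapses. A correct completion needs a different mechanism for producing three distinct wedges: for straight lines one can note that every vertex of the convex hull of the $\binom{m}{2}$ crossing points carries an empty forward cone (take $\theta$ in its normal cone) and that there are at least three such hull vertices; for genuine pseudolines, where no convex hull is available, one is pushed back to Levi's/Gr\"unbaum's inductive argument, in the spirit of the paper's Lemma~\ref{lem:triang_in_halfplane}.
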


\begin{lemma}[{Gr\"unbaum~\cite{Gruenbaum_aas}}]\label{lem:triang_in_halfplane}
	Let $L$ be a simple arrangement of pseudolines, and 
	let $H$ be a closed halfplane determined by two pseudolines 
	$\ell_1, \ell_2 \in L$.
	If two other pseudolines of $L$ cross in the interior of $H$,
	then there is a triangular cell in $H$ that is incident to 
	$\ell_1$ but not to $\ell_2$.
\end{lemma}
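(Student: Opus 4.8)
The plan is to reduce the statement to a transparent picture inside $H$ and then isolate the desired triangle by an extremal (lowest-crossing) argument.

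First I would describe $H$ concretely. The two pseudolines meet in a single point $v=\ell_1\cap\ell_2$, and removing them splits $\PP^2$ into two disks, each bounded by $\ell_1$ and $\ell_2$ meeting at $v$; let $H$ be the one whose interior contains the promised crossing. Any further pseudoline $m\in L$ crosses $\ell_1$ once and $\ell_2$ once, hence meets the boundary $\partial H=\ell_1\cup\ell_2$ transversally in exactly two points; consequently $m\cap H$ is a single arc, a \emph{chord} of $H$ joining a point of $\ell_1$ to a point of $\ell_2$. Since two pseudolines cross only once in $\PP^2$, any two such chords cross at most once inside $H$, and by hypothesis at least one such crossing exists.

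Next I would put this chord system into a normal form. The chords together with the two boundary arcs form a simple arrangement of pseudolines in a disk in which every chord joins the $\ell_1$-arc to the $\ell_2$-arc; such an arrangement admits a monotone (wiring-diagram) representation. Concretely, I would fix an isotopy of $H$ onto a rectangle carrying $\ell_1$ to the bottom edge, $\ell_2$ to the top edge, and every chord to a $y$-monotone arc running from the bottom edge to the top edge. After this reduction the left-to-right order of the chords along each horizontal level $y=t$ is well defined, two chords are adjacent exactly between consecutive levels, and a crossing is precisely a transposition of two adjacent chords. I expect this monotonization to be the only delicate step: it is where the topology of the pseudolines (rather than their naive drawing) is used, and I would justify it by the standard fact that pseudoline arrangements are representable by wiring diagrams.

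With the normal form in hand, the triangle falls out by minimality. Among all crossings of chords in the interior of $H$, a nonempty finite set, I would pick one, $p=m\cap m'$, of minimum height $y(p)$, and assume $m$ lies left of $m'$ just below $p$. No chord can pass between $m$ and $m'$ at any height $t<y(p)$: such a chord would have to leave the shrinking gap between $m$ and $m'$ before they meet at $p$, i.e.\ cross $m$ or $m'$ at height below $y(p)$, contradicting minimality. In particular $m$ and $m'$ have consecutive feet $a_m<a_{m'}$ on $\ell_1$, and the region $T$ enclosed by the edge $[a_m,a_{m'}]\subset\ell_1$ and the two arcs of $m$ and $m'$ up to $p$ contains no chord and no crossing in its interior (every crossing has height at least $y(p)$, while $T$ lies strictly below $p$). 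Hence $T$ is a triangular cell with three edges and vertices $a_m,a_{m'},p$; its base lies on $\ell_1$, and since $0<y(p)<1$ it does not reach $\ell_2$. This is the required triangular cell incident to $\ell_1$ but not to $\ell_2$; symmetrically the highest crossing yields one incident to $\ell_2$ but not $\ell_1$. The hypothesis that two pseudolines cross inside $H$ is essential, since without any crossing every cell incident to $\ell_1$ would span the whole strip and hence also meet $\ell_2$.
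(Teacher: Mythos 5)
Your proof is correct, but note what you are being compared against: the paper does not prove this lemma at all --- it is stated with attribution to Gr\"unbaum, as a fact extractable from his proof of the theorem that every pseudoline of a simple arrangement borders at least three triangular cells (the paper's Lemma~3). Your reduction to chords is accurate: each pseudoline of $L\setminus\{\ell_1,\ell_2\}$ crosses $\ell_1$ and $\ell_2$ once each and switches sides at each crossing, so it meets $H$ in a single arc from $\ell_1$ to $\ell_2$ (one harmless imprecision: the closed halfplane is a disk with the two boundary occurrences of $v=\ell_1\cap\ell_2$ identified, not an embedded disk, but no chord passes through $v$, so your rectangle model is fine). Given the monotone normal form, your lowest-crossing region $T$ really is a cell: its interior is connected, avoids every pseudoline, and its boundary lies on $\ell_1\cup m\cup m'$, so it is an entire face, incident to $\ell_1$ and not to $\ell_2$. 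The step you yourself flag --- monotonization --- is where all the content sits, and two remarks are in order. First, for an \emph{arbitrary} family of bottom-to-top arcs pairwise crossing at most once, sweepability is not a formality; what rescues you is that your chords are restrictions of the pseudolines of $L$, so you may invoke Goodman's wiring-diagram theorem, or Snoeyink and Hershberger's result that a pseudoline arrangement can be swept starting from any of its pseudolines (here $\ell_1$), as a black box. Second, the proofs of those results proceed by repeatedly locating exactly the kind of triangle this lemma asserts (one leaning on the current sweep curve), so your argument, while formally non-circular once those theorems are cited, essentially outsources the lemma to machinery built from it. The classical Levi--Gr\"unbaum route works directly on the curved picture, extracting an extremal (minimal) triangular region with base on $\ell_1$ and showing minimality forces it to be empty; that is self-contained and elementary, whereas your version buys a clean coordinate-like picture at the cost of importing a theorem whose proof already contains the statement.
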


\begin{proofof}{of Proposition~\ref{prop:lower_bound}}
	Let $L$ be a simple projective line arrangement of 
	$n\ge 4$ pseudolines $\ell_1, \ell_2, \dots, 
	\ell_n$.
	For each pseudoline $\ell_i \in L$, let $t_i$ be the number 
	of triangular cells incident to $\ell_i$ and $h_i$ the number 
	of hourglasses sliced by $\ell_i$. Set $x_i = t_i - h_i/2$.
	For each pseudoline $\ell_i \in L$, there 
	are three possible cases.
	
	\textbf{Case~(i):} there is no hourglass sliced by $\ell_i$.
	By Lemma~\ref{lem:triang_pseudoline}, every pseudoline 
	is incident to at least three triangular cells. 
	Thus, we have $x_i= t_i \ge 3$.
	
	\textbf{Case~(ii):} the pseudoline $\ell_i$ slices an hourglass 
	together with some pseudoline $\ell_j$ and 
	the interior of each of the two halfplanes determined by 
	$\ell_i$ and $\ell_j$ contains at least one crossing of some 
	other pair of pseudolines. By Lemma~\ref{lem:triang_in_halfplane}, 
	$\ell_i$ is incident to the two triangular cells of the hourglass
	plus at least two other triangular cells, one in each closed halfplane. 
	%
	Thus, $t_i\ge 4$. Observation~\ref{obs:one_houglass} 
	implies $h_i \le t_i/2$. Overall
	we get $x_i = t_i - h_i/2 \ge t_i - t_i/4 \ge (3/4) \cdot 4 = 3$.
	
	\textbf{Case~(iii):} the pseudoline $\ell_i$ slices an 
	hourglass together with some pseudoline $\ell_j$, and  
	one of the two closed halfplanes $H_1$ and $H_2$ 
	determined by $\ell_i$ and $\ell_j$ contains no crossing 
	of any other pair of pseudolines in its interior. 
	Suppose the closed halfplane that contains no further crossing 
	is $H_1$. Then, the hourglass sliced by $\ell_i$ and $\ell_j$ 
	is in $H_1$, as the other two lines defining the hourglass 
	do not cross in that halfplane; see \fig{fig:proof_lower}~(left). 
	Since $H_1$ contains no crossing in its interior, it is divided 
	by the other pseudolines into $4$-gons and the two triangular 
	cells of the hourglass. In particular, the marked cell is 
	bounded by at most four pseudolines, two of them being $\ell_i$ and $\ell_j$; 
	see \fig{fig:proof_lower} (right). Thus, there can be at most 
	four pseudolines for which case~(iii) applies. 
	Notice that in this case $h_i=1$, since any other hourglass sliced by $\ell_i$ 
	would have one triangular cell in each of the two halfplanes $H_1$ and $H_2$ 
	and the two triangular cells in $H_1$ form the already-counted hourglass 
	(and by Observation~\ref{obs:one_houglass} they cannot be part of another hourglass).
	Thus, we can only guarantee that 
	$x_i \ge 3 - 1/2 = 5/2$.  However, as we showed, 
	this case can happen for at most two pairs of pseudolines.

	\begin{figure}[tb]
		\centering
		\includegraphics[page=1]{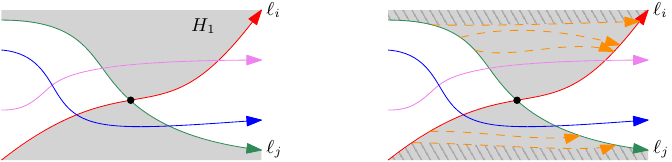}
		\caption{In case~(iii), both $\ell_1$ and $\ell_2$ 
			must bound the marked cell, shown 
			striped on the right picture. Moreover, 
			that cell is bounded by four pseudolines.}\label{fig:proof_lower}
	\end{figure}	
	
	Let $T$ be the total number of triangular cells in $L$
	and let $H$ be the total number of hourglasses. 
	Summing the contributions of cases (i)--(iii), we have
	\[
	3T - H 
	= \sum_{i = 1}^n t_i - \frac{1}{2} \sum_{i = 1}^n h_i
	= \sum_{i = 1}^n x_i 
	\geq 3\cdot (n-4) + 4 \cdot \left(\frac{5}{2}\right)
	= 3n - 2.
	\]
	
	By Observation~\ref{obs:one_houglass}, we have $T\ge 2H$.
	Combining these inequalities,  we get
	\[
	T-H = \frac{3T-H + 2(T-2H)}{5} \ge \frac{3T-H}{5} \ge \frac{3n-2}{5}.
	\]
	By Theorem~\ref{prop:duality_properties}, the number of 
	exit edges in a point set is equal to the number of exit 
	vertices in its dual line arrangement. In general, 
	the number of exit vertices in a pseudoline arrangement 
	is bounded from below by $T-H-1$. 
	Therefore, there are at least $\frac{3}{5}n - \frac{7}{5}$ exit edges.
\end{proofof}

We do not know if the lower bound in 
Proposition~\ref{prop:lower_bound} is tight. 
The smallest number of exit edges we could achieve 
is $n - 3$ for $n \ge 9$; see 
\fig{fig:construction_n_minus_3}. 
We exhaustively checked the set of exit edges for all order types of up to $10$ points using the order type database~\cite{otdb} 
and obtained that this construction with $n-3$ exit edges is optimal for $n=9, 10$. 
Moreover, the order type represented in \fig{fig:construction_n_minus_3} (left) is the only order type of $9$ points that requires $6$ exit edges.

\begin{figure}[hbt]
	\centering
	\includegraphics[page=2]{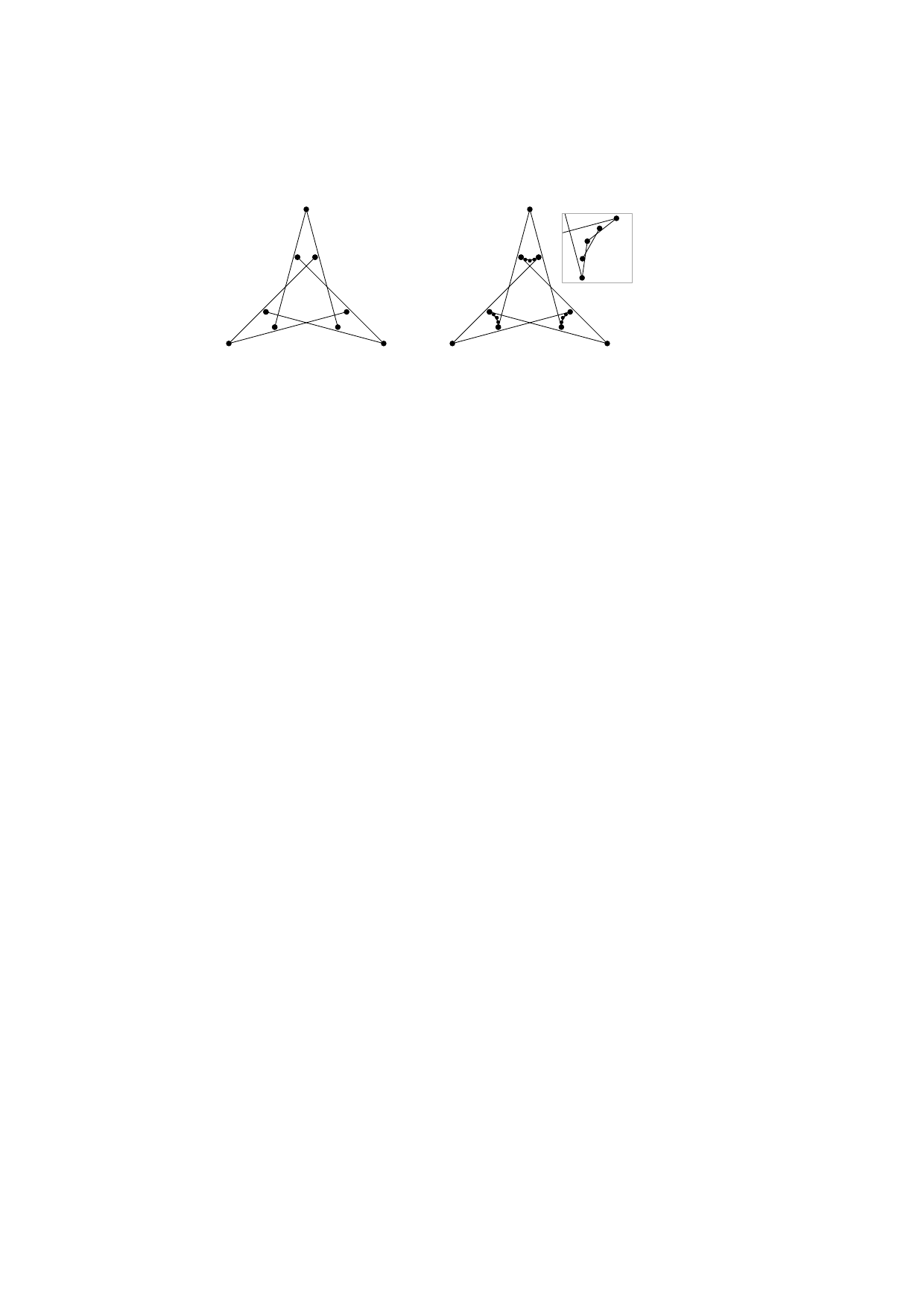}
	\caption{Construction with $n-3$ exit edges.}
	\label{fig:construction_n_minus_3}
\end{figure}

The number of triangular cells in a simple arrangement of $n$ 
lines in the projective plane $\PP^2$ 
is at most~$n(n-1)/3$~\cite{Gruenbaum_aas},
so there are at most  $n^2/3+O(n)$ exit edges. 
This means that 
representing an order type with the exit graph instead of 
the complete geometric graph saves at least one third of the edges.
Pal\'asti and F\"uredi~\cite{FuerdiPalasti84} showed that for every value of $n$ there is a simple arrangement of $n$ 
lines in $\PP^2$ with $n(n-3)/3$ triangular cells. 
Moreover, Roudneff~\cite{ROUDNEFF1986243} and Harborth~\cite{harborth} 
proved that the upper bound $n(n-1)/3$ is tight for infinitely 
many values of~$n$ (see also~\cite{Blanc2011}). 
The point sets that are dual to the currently-known arrangements that maximize the number of triangular cells 
have $n^2/6+O(n)$ exit edges, 
since most of their exit edges have two witnesses.
This gives a quadratic lower bound in the worst case, but the 
leading coefficient remains unknown.
It is worth noting that there are line arrangements 
with no pair of adjacent triangular cells~\cite{no_hourglass},
which implies the existence of point sets 
where every exit edge has precisely one witness. 

We now show a random construction with a quadratic expected number of exit edges. 

\begin{theorem}
	\label{thm-expectedExitEdges}
	Let $S = \{p_1,\ldots,p_n\}$ be a set of $n$ points in the plane with $p_i=(i,y_i)$ for every $i=1,\dots,n$, where each $y_i$ is chosen uniformly at random from the real interval~$[1,n]$.
	Then the expected number of exit edges in $S$ is~$\Theta(n^2)$.
\end{theorem}

The main idea of the proof of Theorem~\ref{thm-expectedExitEdges} is inspired by the proof of Theorem~2.3 from~\cite{BaranyFueredi1987}.

\begin{proof}
	The upper bound $O(n^2)$ on the number of exit edges in $S$ follows from the fact that the number of pairs of points from $S$ is $\binom{n}{2}$.
	In the rest of the proof we establish the lower bound $\Omega(n^2)$.
	
	First, note that all points of $S$ lie in the rectangle $R = [1,n] \times [1,n]$. Assume for convenience that $n$ is divisible by $5$.
	In the following, we identify each point $p_i$ with the number~$i$, which is the $x$-coordinate of $p_i$.
	Let $A = \{1,\ldots,\frac{n}{5}\}$, $B = \{\frac{2n}{5}+1,\ldots,\frac{3n}{5}\}$, and $C = \{\frac{4n}{5}+1,\ldots,n\}$. 
	Let $a$, $b$, and $c$ be fixed integers with $a\in A$, $b \in B$, and $c\in C$.
	We now find a lower bound on the probability that $p_ap_c$ is an exit edge of $S$ with witness $p_b$.
	
	The probability that the point $p_b$ has vertical distance at most~$1$ from the line segment $p_ap_c$ is at least~$\frac{1}{n}$, because the points from $\{b\} \times \mathbb{R}$ lying at distance at most $1$ from $p_ap_c$ form a vertical line segment of length $2$, and at least one half of this line segment is contained in $R$.
	
	In the following, we assume that $p_b$ has distance at most~1 from~$p_ap_c$.
	Consider a point $p_d$ with $d \in \{a+1,\dots,n\} \setminus \{b,c\}$.
	Since $a \in A$ and $b \in B$, we have $b-a \ge n/5$ and $d-a \le n$.
	Since $p_b$ has vertical distance at most 1 from~$p_ap_c$, the vertical side of the triangle $T$ bounded by the vertical line $\{b\} \times \mathbb{R}$ and by the rays $\overrightarrow{p_ap_b}$ and $\overrightarrow{p_ap_c}$ has length at most $1$;  see Figure~\ref{fig-expectedExitEdges}.
	Since the triangle~$T'$ bounded by these two rays and by the vertical line $\{d\} \times \mathbb{R}$ is similar to $T$, and since $d-a \le 5 (b-a)$, the vertical side of $T'$ has length at most $5$.
	Thus, the probability that $p_d$ lies in the convex wedge spanned by the rays $\overrightarrow{p_ap_b}$ and $\overrightarrow{p_ap_c}$ is at most~$5/n$.
	An analogous argument shows that the probability that a point~$p_d$ with $d \in \{1,\dots,c-1\} \setminus \{a,b\}$ lies in the convex wedge spanned by the rays $\overrightarrow{p_cp_a}$ and $\overrightarrow{p_cp_b}$ is at most~$5/n$.
	In total, the probability that $p_ap_c$ is an exit edge of the point set $\{p_a,p_b,p_c,p_d\}$ with witness $p_b$ is at least $1-10/n$.
	
	\begin{figure}
		\centering
		\includegraphics{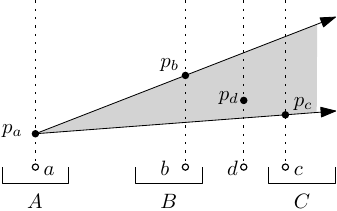}
		\caption{An illustration of the proof of Theorem~\ref{thm-expectedExitEdges}.}
		\label{fig-expectedExitEdges}
	\end{figure}
	
	Altogether, the probability that $p_ap_c$ is an exit edge of $S$ with witness~$p_b$ and that $p_b$ is at vertical distance at most~$1$ from $p_ap_c$ is at least
	\[
	\frac{1}{n} \cdot \prod_{d \in \{1,\dots,n\} \setminus \{a,b,c\}} \left(1-\frac{10}{n}\right) = 
	\frac{1}{n} \cdot \left(1-\frac{10}{n}\right)^{n-3} \ge \frac{1}{n\cdot e^{20}},
	\]
	where we use the inequality $1-x \ge e^{-2x}$ for every real $x$ with $0 \le x \le 1/2$.
	
	Since every exit edge of $S$ has at most two witnesses, the expected number of exit edges of $S$ is at least
	\[
	\frac{1}{2}\sum_{a \in A} \sum_{b \in B} \sum_{c \in C} \frac{1}{n\cdot e^{20}} \ge \Omega(n^2).
	\]
\end{proof}

Combining the point-line duality that maps a point $(a,b)$ to the line $\{(x,y)\in \mathbb{R}^2  \colon y = ax - b\}$ with Theorem~\ref{thm-expectedExitEdges}, we obtain the following result.

\begin{corollary}\label{cor-expectedExitEdges}
	Let $L = \{\ell_1,\ldots,\ell_n\}$ be a set of lines, where $\ell_i = \{(x,y) \in \mathbb{R}^2 \colon  y = i\cdot x - b_i\}$ and where $b_i$ is chosen uniformly at random from the real interval~$[1,n]$.
	Then the expected number of triangular cells in the line arrangement induced by~$L$ is~$\Theta(n^2)$.
\end{corollary}

\section{Properties of exit graphs}\label{sec:properties}

We present some further results on supporting graphs and exit graphs.

\begin{theorem}
	Any geometric graph supporting a point set $S \subset \RR^2$, with 
	$|S| \ge 9$, contains a crossing.
\end{theorem}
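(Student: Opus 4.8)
The plan is to prove the contrapositive: no crossing-free geometric graph on $n\ge 9$ points can be supporting. Suppose toward a contradiction that $G$ is a crossing-free straight-line graph on $S$ with $|S|=n\ge 9$; I will construct an ambient isotopy as in Definition~\ref{def:supporting} that changes the order type. The key observation is that any continuous deformation keeping $G$ crossing-free preserves its combinatorial plane embedding (the rotation system together with the outer face), since this is a discrete invariant that can change only when two edges cross. Conversely, by the plane morphing theorem~\cite{morph}, any two straight-line plane drawings of $G$ sharing this embedding are joined by a continuous family of straight-line plane drawings; extending each member to a homeomorphism of $\RR^2$ yields exactly an ambient isotopy of the kind allowed in Definition~\ref{def:supporting} (edges straight throughout, and, after a generic choice of the morph, at most three collinear points at every time). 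Hence it suffices to produce a single straight-line plane drawing $G'$ of $G$, with the same combinatorial embedding, whose vertex set realizes a different order type than $S$.

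Next I reduce to triangulations. Any crossing-free $G$ on $S$ can be augmented, within its drawing, to a triangulation $T$ of $S$ by adding crossing-free straight edges. If some ambient isotopy witnesses that $T$ is not supporting, then the very same isotopy keeps every edge of $G\subseteq T$ straight and obeys the identical three-collinear constraint, so it also witnesses that $G$ is not supporting. (This disposes of disconnected $G$ as well, since $T$ is connected and spans all of $S$.) Therefore it is enough to show that no triangulation on $n\ge 9$ points is supporting which, by the previous paragraph, amounts to showing that the combinatorial type of such a triangulation never determines the order type of its vertex set.

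For the core statement I would locate a triple whose orientation is not forced by the combinatorial triangulation and then reverse it by a plane re-embedding. A convenient source of flexibility is an interior vertex $v$ whose link is a polygon with at least four vertices: keeping the other vertices fixed and moving $v$ inside the kernel of its link across a chord $\overline{pq}$ between two link-neighbours $p,q$ reverses the orientation of $(v,p,q)$ while all edges of $T$ stay plane, provided the kernel meets both sides of $\overline{pq}$; when the link is not in convex position one first re-embeds it, exploiting the freedom of the drawing, so that such a chord becomes available. The hypothesis $n\ge 9$ is precisely what guarantees a movable configuration of this type: a triangulation on at least nine vertices cannot consist solely of degree-$3$ interior vertices locked inside their link triangles around a rigid boundary, so some vertex, or some pair of vertex-disjoint empty triangles whose relative orientation is free, admits the required deformation.

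The main obstacle is exactly this last step: proving that the threshold is nine and that a suitable free triple always exists. Carrying it out rigorously calls for a structural analysis of triangulations (or, dually, of the associated pseudoline arrangement), ruling out the finitely many small rigid configurations that occur for $n\le 8$ and showing that every triangulation on $n\ge 9$ vertices contains an interior vertex of degree at least four, or two independent triangles, admitting a plane re-embedding that flips an orientation. Once such a re-embedding $G'$ is produced, the morphing argument of the first paragraph turns it into a valid ambient isotopy that changes the order type, contradicting that $G$ is supporting and completing the proof.
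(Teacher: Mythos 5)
Your outer framework matches the paper's: reduce to showing that a crossing-free $G$ admits a second straight-line plane drawing whose vertex set has a different order type, and then invoke the plane morphing theorem~\cite{morph} to turn that second drawing into an ambient isotopy of the kind forbidden by Definition~\ref{def:supporting}. That part, including the harmless augmentation to a triangulation, is fine. The problem is that the entire content of the theorem lives in the middle step --- producing the second drawing and explaining where the threshold $9$ comes from --- and there your proposal has a genuine gap, which you yourself flag as ``the main obstacle.'' Worse, the structural claim you lean on is false as stated: it is not true that a triangulation on $n\ge 9$ vertices ``cannot consist solely of degree-$3$ interior vertices locked inside their link triangles.'' Stacked triangulations on arbitrarily many vertices have every interior vertex of degree exactly $3$, so the mechanism of moving a vertex across a chord of a quadrilateral-or-larger link is simply unavailable for an infinite family of triangulations, and the fallback (``some pair of vertex-disjoint empty triangles whose relative orientation is free'') is never made precise. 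Nothing in the sketch explains why $9$ is the right bound rather than $8$ or $12$.

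The paper closes this gap with one citation that does all the work: Dujmovi\'c's theorem~\cite{untangling} that every planar graph admits a plane straight-line drawing with at least $\sqrt{n/2}$ vertices on a common line. For $n\ge 9$ this gives a drawing of $G$ with three collinear vertices; perturbing that collinear triple to either side yields two general-position drawings of $G$ with different order types, at least one of which differs from the order type of $S$, and the morph then finishes the argument exactly as you describe. This also explains the constant $9$: it is the smallest $n$ with $\sqrt{n/2}>2$. So your proposal is not wrong in its architecture, but without a correct replacement for the collinearity result it does not constitute a proof; the case analysis you propose would at minimum have to handle stacked triangulations by a different deformation, and you give no argument for those.
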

\begin{proof}
	Let $G$ be a geometric graph with vertex set $S$ without crossings.
	There is a point set $S'$ with 
	a different order type that also admits~$G$:
	Dujmovi{\'{c}}~\cite{untangling} showed that every plane graph admits a plane straight-line embedding with at least $\sqrt{n/2}$ points on a line; as we have a point set with a collinear triple that admits~$G$, there are at least two point sets in general position with a different order type that admit~$G$.
	Moreover, one can continuously morph $S$ to $S'$ while keeping the corresponding geometric graph planar and isomorphic to~$G$ (see, for example,~\cite{morph}).
	Therefore, $G$ does not support~$S$.
\end{proof}

\begin{proposition}\label{prop:extreme_points}
	Let $S$ be a point set in general position in $\RR^2$ and let $G$ be its exit graph.
	Every vertex in the unbounded face of $G$ is extremal, that is, it lies on the boundary of the convex hull of $S$.
\end{proposition}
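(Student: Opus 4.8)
The plan is to prove the contrapositive: if a vertex $v \in S$ is internal (not on the convex hull boundary), then $v$ is not in the unbounded face of the exit graph $G$, i.e.\ $v$ is enclosed by exit edges. First I would observe that since $v$ is internal, it lies in the interior of the convex hull, so $v$ is surrounded by points of $S$ in all directions; in particular, for every direction there are points of $S$ on both sides of any line through $v$. The goal is to exhibit a cycle (or at least a closed curve) of exit edges that separates $v$ from the unbounded face.

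The key idea I would pursue is to use Proposition~\ref{lem:characterisation_of_exit_edges} together with the structure of $3$-holes and $4$-holes around $v$. Concretely, for an internal vertex $v$, consider the triangles of a triangulation of $S$ (or the empty triangles) incident to $v$; these form a cyclic fan around $v$. I would try to show that the edges bounding the cells immediately surrounding $v$ include enough exit edges to form a closed separating curve. Alternatively, and perhaps more cleanly, I would argue directly about faces of $G$: suppose for contradiction that $v$ lies on the unbounded face of $G$. Then there is a curve $\gamma$ from $v$ to infinity avoiding all exit edges. As $\gamma$ leaves the convex hull, it must cross some extremal edge or pass near extremal vertices. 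The essential point is to contradict emptiness: if $v$ can ``see out'' to infinity without crossing an exit edge, then along the way one should be able to identify a pair $a,b$ whose double-wedge condition (from the equivalent formulation right after Definition~\ref{def:exit-edge}) is violated, forcing $ab$ to actually be an exit edge blocking $\gamma$.

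The cleanest route I would attempt first is the following. Take the point $a \in S$ that is \emph{closest} to $v$ (or, more robustly, pick the nearest neighbor). I would then build a sequence of exit edges ``rotating around'' $v$: starting from one empty triangle $vab$ incident to $v$, the edge $ab$ opposite to $v$ is a natural candidate for an exit edge with witness $v$, since $v$ is a nearby point and the triangle is empty. Using Proposition~\ref{lem:characterisation_of_exit_edges}, an internal edge $ab$ is an exit edge precisely when suitable $4$-holes sit on both sides; the empty triangle $vab$ supplies the $4$-hole structure on the $v$-side. By sweeping over all triangles of the fan around $v$, I expect to produce a cyclic sequence of such opposite edges, and the sub-collection that are genuine exit edges should still enclose $v$.

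The main obstacle will be ruling out ``gaps'': it is not automatic that every opposite edge $ab$ in the fan around $v$ is an exit edge, since the $4$-hole condition of Proposition~\ref{lem:characterisation_of_exit_edges} must hold on \emph{both} sides of $ab$, and the far side (away from $v$) is outside my control. I would handle this by a \emph{minimality/closest-point} argument: among all exit edges of $S$, or among all edges separating $v$ from infinity, choose the ones closest to $v$, and show that consecutive chosen edges must share endpoints so that no escaping curve $\gamma$ can slip between them. The delicate case is when the locally nearest structure around $v$ produces edges that fail the far-side $4$-hole test; there I expect to need the observation (stated just after Proposition~\ref{lem:characterisation_of_exit_edges}) that an internal exit edge ``either has a witness on both sides or is incident to at least one general $4$-hole on one side,'' which should guarantee that the edge nearest to $v$ on each side of the relevant line is indeed an exit edge with witness on the $v$-side, thereby closing the enclosing curve and completing the contradiction.
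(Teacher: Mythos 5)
Your overall strategy (contradiction/contrapositive via an ``escape'' curve from an internal vertex $v$ to infinity, which must be blocked by an exit edge) matches the paper's starting point, but the core of your argument has a genuine gap that you flag yourself and do not close. The central claim --- that the edges $ab$ opposite $v$ in a fan of empty triangles $vab$ around $v$ are exit edges with witness $v$, or that enough of them are to enclose $v$ --- is not established. Emptiness of the triangle $vab$ controls only a small portion of the two double wedges required by Definition~\ref{def:exit-edge} (each double wedge at $a$ and at $b$ extends far beyond the triangle, including the opposite sectors), and by Proposition~\ref{lem:characterisation_of_exit_edges} an internal edge $ab$ also needs the right $4$-hole structure on the side \emph{away} from $v$, which the fan gives you no handle on. Your proposed repair is to invoke the remark following Proposition~\ref{lem:characterisation_of_exit_edges} (``an internal exit edge either has a witness on both sides or is incident to at least one general $4$-hole on one side'') to ``guarantee that the edge nearest to $v$ \ldots\ is indeed an exit edge''; but that remark states a \emph{necessary} property of exit edges, and you are using it as a \emph{sufficient} criterion. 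The ``consecutive chosen edges must share endpoints'' step is likewise asserted, not proved, and it is not clear it is even true as stated (the exit edges separating $v$ from infinity need not form a cycle; they may cross one another).

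For comparison, the paper's proof does not attempt to build an enclosing cycle of exit edges. It first reduces, by a minimality argument over all internal vertices on the unbounded face, to a single straight escape segment $rq$ ending in the interior of a hull edge $st$, then passes to the dual line arrangement: crossing an exit edge corresponds to the horizontal sweep from $r^*$ to $q^*$ meeting the exit vertex of a triangular cell bounded by one line of positive and one of negative slope. It then runs a careful incremental insertion of the dual lines (first those of positive slope by increasing slope, then those of negative slope by decreasing slope), maintaining an invariant triangular region, to exhibit such a cell inside the triangle $r^*s^*t^*$. The amount of case analysis this requires is a good indication that the primal ``fan of empty triangles'' picture does not yield the statement directly, and that your sketch would need a substantially new idea to become a proof.
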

Note that, as shown in \fig{fig:not_nec} (left), 
an analogous statement does not hold for general supporting graphs. 

\begin{proof}
	Suppose for contradiction that there is a point $p \in S$ incident to the unbounded face of the exit graph of $S$ and that is internal in $S$,
	that is, lies in the interior of the convex hull $\conv(S)$ of $S$.
	This means that there is a polygonal path inside $\conv(S)$ from~$p$ to the boundary of $\conv(S)$ such that the interior of this path intersects no exit edge of~$S$. Let $\delta(p)$ be the infimum of the lengths of such paths. Since $\conv(S)$ and $S$ are both compact sets, there is a polygonal path $P_p$ of length $\delta(p)>0$ from $p$ to the boundary of $\conv(S)$ that has no crossing with exit edges but may pass through other points of $S$.
	Among all such points $p$, let $r\in S$ be the point for which $\delta(r)$ is the minimum possible. Then $P_r$ is a single segment. Let $q$ be the endpoint of $P_r$ on the boundary of $\conv(S)$.
	
	If $q$ coincides with an extremal point in $S$, 
	we slightly perturb the point $q$ so that $q$ lies in the interior of an edge of $\conv(S)$ and 
	the line segment $rq$ does not intersect any exit edge of $S$.
	Let $s$ and~$t$ be the endpoints of the edge of $\conv(S)$ containing $q$; see \fig{fig:extremal_point} for an illustration.

	\begin{figure}[bth]
		\centering
		\includegraphics[page=1]{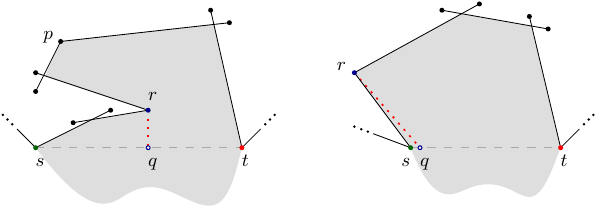}
		\caption{
			An illustration of the proof of 
			Proposition~\ref{prop:extreme_points}.
			The path between $r$ and $q$ is drawn as a red dotted line segment.}\label{fig:extremal_point}
	\end{figure}
	
	Since exit edges are invariant to 
	nondegenerate affine transformations 
	we assume without loss of generality that the following three conditions are satisfied.
	\begin{enumerate}[(i)]%
		\item The points $r$ and $q$ lie on the $y$-axis, $s$ has negative $x$-coordinate and $t$ has positive $x$-coordinate,
		\item the point $r$ lies above the line $\overline{st}$, and
		\item all points of $S$ have distinct $x$-coordinates. 
	\end{enumerate}
	
	To obtain a contradiction, we will show that the %
	segment $rq$ intersects the interior of an exit edge of~$S$.
	We will prove this in a dual setting.

	By applying the duality transformation mentioned in Section~\ref{sec:empty_triangles} that maps each point $p=(p_x,p_y)$ to the (non-vertical) line $p^*: y=p_xx - p_y$, we map the point set $S$ to the dual line arrangement~$S^*$.
	Due to the three conditions above, the lines $r^*$ and $q^*$ are horizontal and the lines $s^*$ and $t^*$ have a negative and a positive slope,
	respectively; see \fig{fig:extremal_dual}.
	By Theorem~\ref{prop:duality_properties}, a triple of points of $S$ representing the endpoints of an exit edge together with its witness, such that the $x$-coordinate of the witness is between the $x$-coordinates of the endpoints of the exit edge, corresponds to a triangular cell in $S^*$ where the dual of the witness is the line with median slope bounding this cell.
	\begin{figure}[htb]
		\centering
		\includegraphics[page=1]{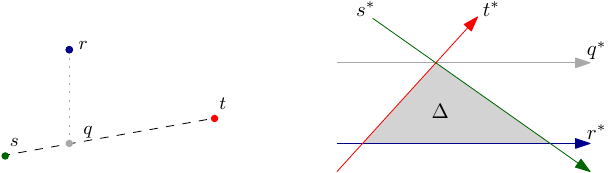}
		\caption{Applying the dual transformation to the point set $S$ (left) and obtaining the line arrangement $S^*$ (right).}\label{fig:extremal_dual}
	\end{figure}

	Let $\triangle$ be the triangular region bounded by the lines~$r^*$,~$s^*$, and~$t^*$. 
	Since the line segment $st$ is not an exit edge in~$S$, 
	the triangular region $\triangle$ is not a cell in~$S^*$. 
	Thus, the interior of $\triangle$ is intersected by some line from~$S^*$.
	Since $s$ and $t$ are vertices of $\conv(S)$, their duals $s^*$ and $t^*$ are incident to the upper envelope of $S^*$.
	
	Moving a point $p$ vertically down from $r$ to $q$ corresponds to sweeping the dual $S^*$ by a horizontal line $p^*$ from $r^*$ to $q^*$.
	Thus, meeting an exit edge of $S$ with $p$ corresponds to the situation in the dual in which the sweeping line $p^*$ meets a vertex of a triangular cell of~$S^*$ such that the vertex is an intersection of a line with a positive slope and a line with a negative slope.
	Therefore, the line segment $rq$ crosses an exit edge of $S$ if and only if there is a triangular cell~$\triangle'$ of $S^*$ between $r^*$ and  $q^*$ such that $\triangle'$ is bounded by a line with positive slope and a line with negative slope.
	To obtain a contradiction, we will show that $\triangle$ contains such a triangular cell $\triangle'$.
	
	We start with the line arrangement containing the lines $r^*$, $s^*$, and $t^*$.  
	First, we insert the set $L^+$ of lines from $S^*$ with positive slope that intersect the interior of~$\triangle$. 
	The goal is to find a triangular region $\triangle^+$ in $\triangle$ with one edge on~$s^*$ 
	such that no line from $S^*$ with positive slope intersects the interior of $\triangle^+$. 

	Since the lines $s^*$ and $t^*$ must bound the upper envelope (and are consecutive on it),
	no line from $S^*$ with positive slope can intersect $s^*$ above its intersection with $t^*$. 
	Thus, the lines from $L^+$ cannot intersect both $r^*$ and $t^*$ on the boundary of $\triangle$. 
	By definition, the lines from $L^+$ must intersect two of the segments bounding $\triangle$ and therefore   
	they must intersect $s^*$ on the boundary of~$\triangle$; 
	see \fig{fig:new_positive} (left).

	\begin{figure}
	\centering
	\includegraphics[page=3]{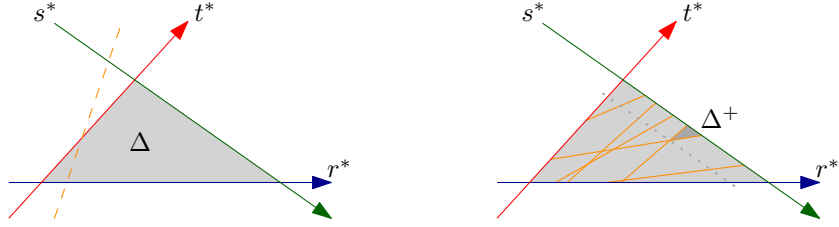}
	\caption{
		Inserting the set of lines $L^+$ from $S^*$ with positive slope that intersect the interior of $\triangle$. 
		Left: the dashed line cannot be in $L^+$ since the intersection of $s^*$ and $t^*$ must be on the upper envelope. Thus, the lines in $L^+$ must intersect $s^*$ on the boundary of $\triangle$.
		Right: finding a triangular region $\triangle^+$ inside $\triangle$ bounded by $s^*$.}
	\label{fig:new_positive}
	\end{figure}

	Consider the intersection point in $\triangle$ closest to $s^*$ 
	produced by two lines $\tilde{r}^*$ and $\tilde{t}^*$ (that possibly coincide with $r^*$ or $t^*$) from $\{r^*,t^*\}\cup L^+$. 
	We assume that the slope of $\tilde{t}^*$ is larger than the slope of $\tilde{r}^*$. 
	Since all the lines from $L^+$ intersect~$s^*$ on the boundary of $\triangle$, 
	the intersection of $\tilde{r}^*$ and $\tilde{t}^*$ is the leftmost vertex of a triangular cell $\triangle^+$ (of $\{r^*,s^*,t^*\}\cup L^+$) bounded by~$s^*$; 
	see \fig{fig:new_positive} (right) for an illustration.
	Moreover, $\triangle^+$ is contained in $\triangle$ 
	and it is thus a cell of the arrangement defined by $r^*$ and $s^*$  together with all the lines with positive slope from $S^*$ (including $t^*$ and all the lines in $L^+$).

	We now consider the lines from $S^*$ with negative slope. 
	We denote by $L^-$ the set of lines from $S^*$ with negative slope that intersect the interior of $\triangle^+$. 
	Analogously as before, we show that 
	there is a triangular cell $\triangle'$ of~$S^*$ inside~$\triangle^+$ 
	with one edge on $\tilde{t}^*$.
	
	Since the lines $s^*$ and $t^*$ must bound the upper envelope, 
	lines from $S^*$ with negative slope and steeper than $s^*$ 
	must intersect $s^*$ above its intersection with~$t^*$ (and therefore above its intersection with $\tilde{t}^*$).  
	Thus, the lines from $L^-$ cannot intersect both $\tilde{r}^*$ and $s^*$ on the boundary of~$\triangle^+$; 
	see \fig{fig:new_negative} (left).  
	By definition, the lines from $L^-$ must intersect two of the segments bounding~$\triangle^+$ and therefore   
	they must intersect $\tilde{t}^*$ on the boundary of~$\triangle^+$. 
	\begin{figure}[htb]
		\centering
		\includegraphics[page=4]{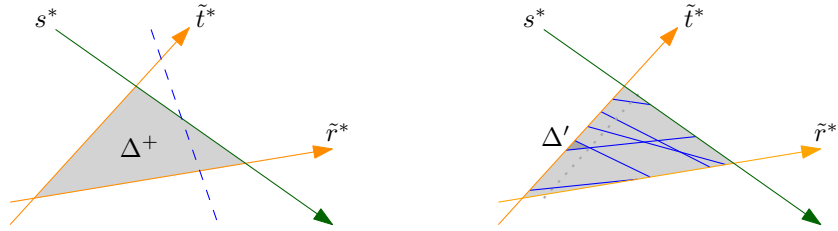}
		\caption{Inserting the set of lines $L^-$ from $S^*$ with negative slope that intersect the interior of $\triangle^+$. Left: the dashed line cannot be in $L^-$  since the intersection of $s^*$ and $\tilde{t}^*$ must be on the upper envelope. Thus, the lines in $L^-$ must intersect $\tilde{t}^*$ on the boundary of $\triangle^+$. Right: finding a triangular cell $\triangle'$ inside $\triangle^+$ bounded by $\tilde{t}^*$.}
		\label{fig:new_negative}
	\end{figure}
	
	In an analogous manner as before, 
	the intersection in $\triangle^+$ closest to $\tilde{t}^*$ defines a triangular cell $\triangle'$ inside $\triangle^+$ bounded by $\tilde{t}^*$;	see \fig{fig:new_negative} (right).  
	Thus, we found a triangular cell $\triangle'$ of $S^*$ contained in $\triangle$ 
	bounded by a line with positive slope and a line with negative slope.  
	Altogether, by duality, this implies that the %
	segment $rq$ crosses an exit edge of~$S$, which is a contradiction.
\end{proof}

\section{Concluding remarks}\label{sec:conclusion}

We conjecture that the geometric graph $G$ of exit edges not only is
supporting for $S$, %
but also that any point set $S'$ that is the vertex set of a geometric
graph isomorphic to~$G$ has the same order type as $S$. One might
conjecture that already knowing all exit edges and their witnesses (in
the dual line arrangement, all triangular cells and their orientations) is
sufficient to determine the order type. Surprisingly, this turns out to be
false.

A counterexample is sketched in \fig{fig:OTs_exist_triples} as a dual (stretchable) pseudoline arrangement of 14~lines in the projective plane, 
based on an example by 
Felsner and Weil~\cite{hbo-dcg}.
It consists of two arrangements of six lines in the Euclidean plane 
that are combinatorially different, 
but share the set of triangular cells and their orientations.
While the exit edges and their witnesses are the same for the two different order types, 
the corresponding exit graphs are not isomorphic.
\begin{figure}[htb]
	\centering
	\includegraphics[page=2,width=\textwidth]{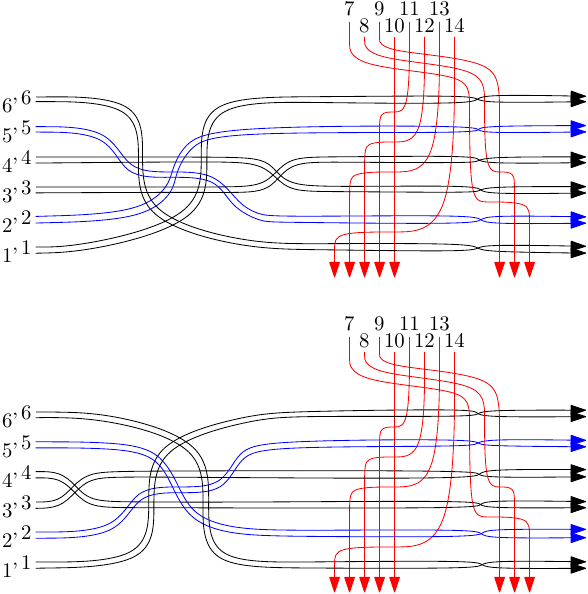}
	\caption{Top: 
		two arrangements of 14 pseudolines with the same set of triangular cells (extending~\cite[Figure~3]{hbo-dcg}).
		No triangular cell is crossed by the line at infinity.
		Bottom: 
		corresponding dual point sets and exit graphs.
		The order types are not the same (see for example the number of extremal points).}\label{fig:OTs_exist_triples}
\end{figure}

In the dual of that example the order of the triangular cells along each pseudoline differs, 
but that extra information is not enough to distinguish the two order types: 
We can modify the pseudoline arrangements in \fig{fig:OTs_exist_triples} by, essentially, 
duplicating pseudolines 1--6  
and making a pseudoline and its duplicate cross between the crossings with two red pseudolines (7--14).   
In \fig{fig:countereg} 
we present an illustration. 
It shows two pseudoline arrangements with the same triangular cells (including their orientations) 
and the same order of triangular cells along each pseudoline. 
However, the corresponding order types are not the same (see for example the number of extremal points). 
Note that the dual point sets of the pseudoline arrangements in \fig{fig:countereg} 
can be  obtained from the ones in \fig{fig:OTs_exist_triples} by adding a copy of points 1--6 close to the original respective points. 
Thus, we cannot reconstruct the order type from that information.

\begin{figure}[htb]
	\centering
	\includegraphics[page = 1]{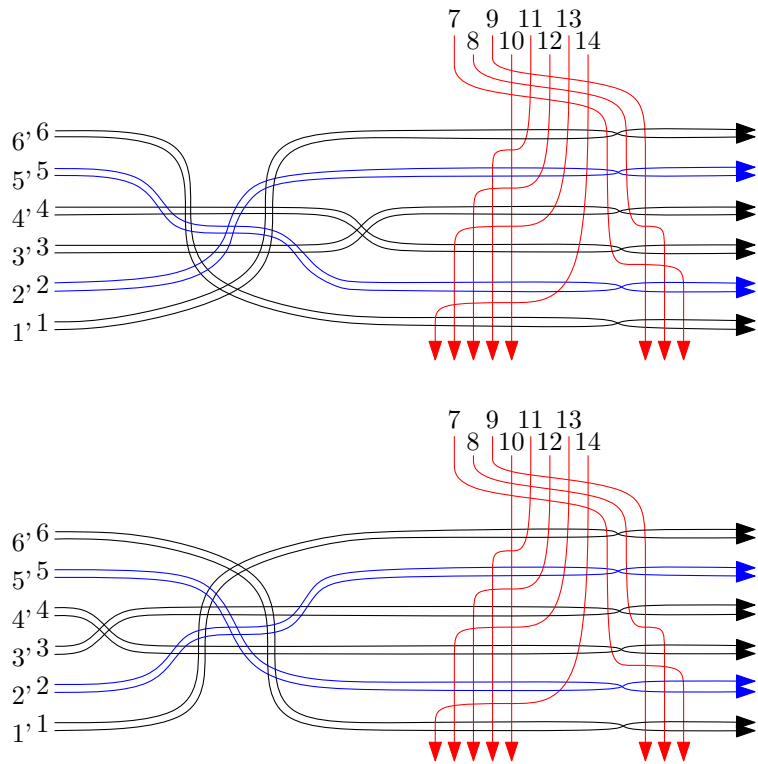}
	\caption{Two arrangements of 20 pseudolines with the same set of triangular cells (extending~\cite[Figure~3]{hbo-dcg}) and with the same ordering of the triangular cells along the pseudolines, but corresponding to different
		order types.}\label{fig:countereg}
\end{figure}

\section*{Acknowledgments} This work was initiated during the 
\emph{Workshop on Sidedness Queries}, October 2015, in Ratsch, Austria.
We thank Thomas Hackl, Vincent Kusters, and Pedro Ramos for valuable 
discussions.

\clearpage

\bibliographystyle{abbrvurl}
\bibliography{references}

\end{document}